\journal{Advances in Applied Mathematics and Mechanics}
\newtheorem{theorem}{Theorem}[section]
\newtheorem{lemma}{Lemma}[section]
\newtheorem{definition}{Definition}[section]
\numberwithin{equation}{section}
\numberwithin{figure}{section}
\numberwithin{table}{section}
\def\XXint#1#2#3{{\setbox0=\hbox{$#1{#2#3}{\int}$}
\vcenter{\hbox{$#2#3$}}\kern-.51\wd0}}
\begin{document}

\setlength{\pdfpageheight}{\paperheight}
\setlength{\pdfpagewidth}{\paperwidth}
\title{Ultraspherical Spectral Method for Block Copolymer Systems on Unit Disk}
\author{Wangbo Luo, Yanxiang Zhao}
\address{Department of Mathematics, George Washington University, Washington D.C., 20052}
\fntext[myfootnote]{Corresponding author: yxzhao@email.gwu.edu}

\begin{abstract}

In this paper, we investigate ultraspherical spectral method for the Ohta-Kawasaki (OK) and Nakazawa-Ohta (NO) models in the disk domain, representing diblock and triblock copolymer systems, respectively. We employ ultraspherical spectral discretization for spatial variables in the disk domain and apply the second-order backward differentiation formula (BDF) method for temporal discretization. To our best knowledge, this is the first study to develop a numerical method for diblock and triblock copolymer systems with long-range interactions in disk domains. We show the energy stability of the numerical method in both semi-discrete and fully-discrete discretizations. In our numerical experiments, we verify the second-order temporal convergence rate and the energy stability of the proposed methods. Our numerical results show that the coarsening dynamics in diblock copolymers lead to bubble assemblies both inside and on the boundary of the disk. Additionally, in the triblock copolymer system, we observe several novel pattern formations, including single and double bubble assemblies in the unit disk. These findings are detailed through extensive numerical experiments.
\end{abstract}

\begin{keyword}
block copolymer, Ohta-Kawasaki model, Nakazawa-Ohta model, Disk domain, Ultraspherical spectral method, Bubble assembly, 'Head-to-tail’ pattern
\end{keyword}

\date{\today}
\maketitle

\section{Introduction}\label{Intro}
Block copolymers are macromolecules that contain monomers comprising different repeating units covalently linked together in polymer chains. Due to their remarkable ability for self-assembly, block copolymers form various nanoscale-ordered structures at thermodynamic equilibrium \cite{Bats_Fredrickson1990,Bats_Fredrickson1999,Hamley2004,Botiz_Darling2010} and have attracted considerable theoretical and experimental interest over the past several decades \cite{Helfand_Wasserman1976,Leibler1980,Meier1969,Ohta_Kawasaki1986}. Diblock copolymers, referred to as the binary system, comprise two distinct subchains made up of species $A$, and $B$, respectively. In contrast, triblock copolymers, known as the ternary system, consist of three subchains, each containing species $A$, $B$, and $C$, respectively.    

In this paper, we begin with the Ohta-Kawasaki (OK) and Nakazawa-Ohta (NO) models introduced in \cite{Ohta_Kawasaki1986,Nakazawa_Ohta1993}, which are general block copolymer systems with long-range interactions and describe the binary and ternary systems, respectively. In our study, we aim to explore the coarsening dynamic and pattern formations at equilibrium state in the OK/NO models in the disk domain.

Using the phase field labeling function $u = u(x)$ to represent the density of the species $A$ and tracing the concentration of the species $B$ by $1-u(x)$, the OK model introduces the diblock copolymers by the following energy functional \cite{Xu_Zhao2019,Xu_Zhao2020}:
\begin{align}\label{eqn:OK}
        E^{\text{OK}}[u] = \int_{\Omega}\left[\frac{\epsilon}{2}|\nabla u|^2+\frac{1}{\epsilon}W(u)\right] \text{d}x + \frac{\gamma}{2}\int_{\Omega}|(-\Delta)^{-\frac{1}{2}}(u-\omega)|^2 \ \text{d}x,
\end{align}
with a volume constraint
\begin{align*}
    \int_{\Omega}u\ \text{d}x = \omega|\Omega|,
\end{align*}
where $0<\epsilon\ll 1$ is an interface parameter, $\Omega\in \mathbb{R}^2$ is the spatial domain. $W(u) = 18(u^2-u)^2$ is a double well potential which enforces the labeling function $u$ to be $0$ and $1$ except the interfacial region between species $A$ and $B$. The parameter $\gamma$ represents the strength of the long-range repulsive interaction that favors the smaller size of the species and forces them to split in the OK model. Similarly, the free energy functional of the NO model \cite{Nakazawa_Ohta1993} for the triblock copolymer system is defined as follows:
\begin{align}\label{eqn:NO}
    E^{\text{NO}}[u_1,u_2] = & \int_{\Omega}\frac{\epsilon}{2}\left(|\nabla u_1|^2+|\nabla u_2|^2+\nabla u_1 \cdot \nabla u_2\right) \text{d}x + \int_{\Omega} \frac{1}{\epsilon}W_2(u_1,u_2)\ \text{d}x \nonumber\\
    & + \sum_{i,j=1}^{2}\frac{\gamma_{ij}}{2}\int_{\Omega}\left[(-\Delta)^{-\frac{1}{2}}(u_i-\omega_i)\times (-\Delta)^{-\frac{1}{2}}(u_j-\omega_j)\right] \text{d}x,
\end{align}
where $u_i = u_i(x)$, $i=1,2$ are phase field labeling functions representing the density of species $A$ and $B$, respectively. The concentration of species $C$ can be implicitly described by $1-u_1(x)-u_2(x)$. $W_2(u_1,u_2)$ is defined as $W_2(u_1,u_2): = \frac{1}{2}\left[W(u_1)+W(u_2)+W(1-u_1-u_2)\right]$, and the volume constraints for $u_1$ and $u_2$ are
\begin{align*}
        \int_{\Omega}u_i\ \text{d}x = \omega_i|\Omega|, \ \ i = 1,2.
\end{align*}

To implement the volume constraint in energy minimization, we introduce penalty terms to the energy functionals (\ref{eqn:OK} and \ref{eqn:NO}), leading to the unconstrained free energy functionals of the penalized OK model  \cite{Xu_Zhao2019},
\begin{align}\label{eqn:pOK}
    E^{\text{pOK}}[u] & = \int_{\Omega}\left[\frac{\epsilon}{2}|\nabla u|^2+\frac{1}{\epsilon}W(u)\right] \text{d}x + \frac{\gamma}{2}\int_{\Omega}|(-\Delta)^{-\frac{1}{2}}(u-\omega)|^2 \ \text{d}x \nonumber\\
    & + \frac{M}{2}\left(\int_{\Omega}u\ \text{d}x - \omega|\Omega|\right)^2, 
    \end{align}
    and the penalized NO model \cite{Choi_Zhao2021},
    \begin{align}\label{eqn:pNO}
    E^{\text{pNO}}[u_1,u_2] = & \int_{\Omega}\frac{\epsilon}{2}\left(|\nabla u_1|^2+|\nabla u_2|^2+\nabla u_1 \cdot \nabla u_2\right) \text{d}x + \int_{\Omega} \frac{1}{\epsilon}W_2(u_1,u_2)\ \text{d}x \nonumber\\
    & + \sum_{i,j=1}^{2}\frac{\gamma_{ij}}{2}\int_{\Omega}\left[(-\Delta)^{-\frac{1}{2}}(u_i-\omega_i)\times (-\Delta)^{-\frac{1}{2}}(u_j-\omega_j)\right] \text{d}x \nonumber\\
    & + \sum_{i=1}^{2} \frac{M_i}{2}\left(\int_{\Omega}u_i\ \text{d}x - \omega_i|\Omega|\right)^2,
\end{align}
with the penalty constants $M,M_1,M_2\gg 1$. Then by considering the corresponding penalized $L^2$ gradient flow dynamics for the energy functionals (\ref{eqn:pACOK}, \ref{eqn:pACNO}), we derive the penalized Allen-Cahn-Ohta-Kawasaki (pACOK) equation for the time evolution $u(x,t)$ with given initial data $u(x,t=0) = u_0(x)$, 
\begin{align}\label{eqn:pACOK}
    \frac{\partial u}{\partial t} = -\frac{\delta E^{\text{pOK}}[u]}{\delta u} = \epsilon\Delta u - \frac{1}{\epsilon}W'(u) - \gamma(-\Delta)^{-1}(u - \omega) - M \left(\int_{\Omega}u\ \text{d}x - \omega|\Omega|\right), 
\end{align}
and the penalized Allen-Cahn-Nakazawa-Ohta (pACNO) equations for the time evolution $u_i(x,t)$, $i=1,2$ with given initial data $u_i(x,t=0) = u_{i,0}(x)$,
\begin{align}\label{eqn:pACNO}
        \frac{\partial u_i}{\partial t}  = &\ -\frac{\delta E^{\text{pNO}}[u_1,u_2]}{\delta u_i} \nonumber \\
        = &\  \epsilon\Delta u_i + \frac{\epsilon}{2}\Delta u_j- \frac{1}{\epsilon}\frac{\partial W_2}{\partial u_i} - \gamma_{ii}(-\Delta)^{-1}(u_i- \omega)\nonumber \\
        & - \gamma_{ij}(-\Delta)^{-1}(u_j - \omega)
         - M_i \left(\int_{\Omega}u_i\ \text{d}x - \omega_i|\Omega|\right), \ \ i,j = 1,2. 
\end{align}

\subsection{Previous work and our contributions}

There have been extensive theoretical and numerical studies on the OK and NO models in recent years. In \cite{Nishiura_Ohnishi1995,Ren_Wei2000}, the authors presented a simpler analogy of a binary inhibitory system derived from the OK model for diblock copolymers. Meanwhile, Ren and Wei \cite{Ren_Wei2003_1,Ren_Wei2003_2} derived the ternary system from the NO model for triblock copolymers. Additionally, Wang, Ren, and Zhao \cite{Wang_Ren_Zhao2019} numerically and theoretically studied the bubble assemblies in the NO model while varying the system parameters. To study the equilibria and dynamics of the binary and ternary systems numerically, Xu and Zhao \cite{Xu_Zhao2019,Xu_Zhao2020} presented the $L^2$ gradient flow for the energy functional of the OK and NO models and introduced the corresponding  ACOK and ACNO equations. Similar to the energy stable methods for various gradient flow dynamics of different equations \cite{Chen_Conde2012,Hu_Wise2009,Shen_Yang2010,Shen_Wang2012,Wang_Wise2011,Wise_Wang2009}, Luo, Zhao and Choi \cite{Choi_Zhao2021,LuoZhao_Arxiv2023} proposed the stabilized second-order semi-implicit methods in time-discretization and used the Fourier spectral method in spatial domain to solve the ACOK and ACNO equations in the square domain with periodic boundary conditions. Furthermore, Luo and Zhao \cite{LuoZhao_PhysicaD2024,LuoZhao_Arxiv2023} introduced a generalized OK model and perform several numerical and theoretical studies for their model with novel square lattice patterns.

In the binary system, Ren and Shoup \cite{Ren_Shoup2020} investigated the stationary bubble assemblies of the OK model in the disk domain with the Neumann boundary condition and studied the energy difference between the number of interior discs and boundary half discs. The other theoretical works by Wang and Ren \cite{Wang_Ren2017,Wang_Ren2019} explored the explicit form of Green's function of the Laplacian operator in the disk domain. This explicit form played a key role in studying the bubble's interaction with the disk boundary (boundary effect) in the ternary system. However, it is difficult to predict the pattern formation at the equilibrium state for both the binary and ternary systems from analytical studies. Therefore, numerical studies are necessary to guide us to understand the equilibria and coarsening dynamics of the OK and NO models in the disk domain. Furthermore, numerical findings may suggest the new directions for the future theoretical research and practical application.

In contrast to the square domain, where periodic boundary conditions can be efficiently handled using the Fourier spectral method, one of the most challenging problems we faced in this project is to solve the fully discrete systems in the disk domain with the Neumann boundary condition.  A highly effective approach for handling problems in polar geometries is to map the disk domain to a rectangular domain using polar coordinates. By employing polar coordinates, the angular direction becomes periodic, allowing for efficient treatment using techniques like the fast Fourier transform (FFT). Therefore, by carefully addressing the boundary conditions, spectral methods are remarkably well-suited for solving problems in polar geometries. In the 1970s, several spectral-collocation or spectral-tau methods were developed to compute functions in polar geometries \cite{CANUTO_HUSSAINI_QUARTERONI_ZANG1987,
Fornberg1995,
GOTTLIEBANDS_ORSZAG1977,
HUANGANDD_SLOAN1993,
EISEN_HEINRICHS_WITSCH1991}. Eisen et al.(1991) \cite{EISEN_HEINRICHS_WITSCH1991} proposed an algorithm based on an odd-even parity argument for the Cartesian and polar coordinates \cite{Boyd2001,Boyd_Yu2011} and the tau method \cite{Ortiz1969}, which had quasi-optimal computational complexity but was limited to second-order equations. Subsequently, Shen \cite{Shen1995,Shen2003,Shen_Wang2007} developed efficient Spectral-Galerkin methods using bases for second, third, fourth, and higher odd-order differential equations with constant coefficients in polar geometries. However, these methods faced challenges in automatically imposing boundary conditions. In 2011, Boyd and Yu \cite{Boyd_Yu2011} provided a comprehensive overview of numerical methods for computing functions on unit disk and solving Poisson's equation in polar geometries associated with a broad range of strategies.

Recently, Olver and Townsend \cite{Olver_Townsend2013} introduced the so called ultraspherical spectral method for solving differential equations with variable coefficients and general boundary conditions. The ultraspherical spectral method utilizes Chebyshev and ultraspherical polynomial bases, depending only on the order of the differential equation, rather than the specific boundary conditions. This approach leads to nearly banded matrices, preserving efficiency and accuracy in computations. Furthermore, the ultraspherical spectral method is not limited to polar geometries. In fact, Olver and Townsend \cite{Olver_Townsend2015} demonstrated its effectiveness in solving linear partial differential equations (PDEs) with general boundary conditions on rectangular domains. This flexibility makes the ultraspherical spectral method a valuable tool for a wide range of problems in different domains. Employing the ultraspherical spectral method, Wilber, Townsend, and Wright \cite{Wilber_Townsend_Wright2017} developed algorithms on computing functions such as integration, function evaluation, vector calculus on unit disk, in addition to devising a fast disk Poisson solver in polar geometries. Their algorithms applied a structure-preserving variant of iterative Gaussian elimination which enabled the adaptive construction of low-rank approximations for functions in polar geometries. This approach significantly reduced the computational cost associated with various operations while maintaining the ability for fast and spectrally accurate computations in the disk domain.

The main contribution of our work is two-folded. Firstly, we apply the stabilized second-order linear semi-implicit scheme \cite{Choi_Zhao2021,LuoZhao_Arxiv2023} for the pACOK and pACNO equations, where all nonlinear and nonlocal terms are treated explicitly. Incorporating the Neumann boundary conditions, we then solve the fully discrete systems using the ultraspherical spectral method \cite{Olver_Townsend2013} with low-rank approximations \cite{Wilber_Townsend_Wright2017} to efficiently solve the phase field functions in polar geometries at each time step. To the best of our knowledge, this is the first numerical method developed to study binary and ternary systems with long-range interactions over disk domains. Secondly, we investigate pattern formation at the equilibrium state for the OK and NO models in the disk domain. Our observations reveal various types of bubble assemblies, some of which have not been previously reported and deserved further numerical and theoretical exploration. The interior double bubble assemblies in the ternary system, as illustrated in Section \ref{subsec:NO}, are particularly noteworthy. 

\subsection{Notations}

In this paper, for the discussion of energy stability in the rest of the paper, we modify the function $W(s)$ with a quadratic approach. This modification ensures that $W''$ possess a finite upper bound, a prerequisite for the energy stable schemes in Ginzburg–Landau type dynamics \cite{Shen_Yang2010}. We adopt the quadratic extension $\Tilde{W}(u)$ of $W(u)$ as used in \cite{Shen_Yang2010} and referenced in related citations. For simplicity in our discussion, we continue to use $W(u)$ to represent $\Tilde{W}(u)$, and denote $ L_{W''}:= \|W''\|_{L^{\infty}}$, namely, $L_{W''}$ denote the upper bound for $|W''|$.

We adopt the similar notations from \cite{Xu_Zhao2019}. Let the disk domain $\Omega = [0,1]\times[0,2\pi)\subset\mathbb{R}^2$, and the space $\mathring{H}^{s}(\Omega)$ is given by
\begin{align*}
   & \mathring{H}^{s}(\Omega) = \left\{u\in H^s(\Omega): \ u \text{ is homogeneous Neumann in } r, \text{periodic in }\theta, \text{ and } \int_{\Omega}u(x)\ \text{d}x =0 \right\}, 
\end{align*}
consisting of all functions $u\in H^s(\Omega)$ which satisfies the Neumann boundary condition in $r$-direction and the periodic boundary condition in $\theta$-direction with zero mean. We use $\|\cdot\|_{H^s}$ to represent the standard Sobolev norm and $L^2(\Omega) = H^0(\Omega)$.

The inverse Laplacian operator defined on the unit disk $(-\Delta)^{-1}: \ \mathring{L^{2}}(\Omega) \to \mathring{H}^{1}(\Omega)$ follows:
\begin{align}\label{eqn:LapInv}
   & (-\Delta)^{-1}v = u \Longleftrightarrow  -\Delta u = -u_{rr}-\frac{1}{r}u_{r}-\frac{1}{r^2}u_{\theta\theta} = v, \notag\\
   &  u_{r}(r=1,\theta) = 0, \ \ u \text{ is periodic in }\theta, \text{ and homogeneous Neumann in } r
\end{align}
We denote $\|(-\Delta)^{-1}\|_{L^2}$ as the optimal constant such that $\|(-\Delta)^{-1}f\|_{L^{2}}\leq C\|f\|_{2}$ \cite{Canuto_Springer2006}. Note that the constant $C$ is bounded and depends on $\Omega$ which is a simple consequence of the elliptic regularity for general smooth domains.

The rest of the paper is organized as follows. In section \ref{sec2}, we will review the basic ideas of the ultraspherical spectral method, including its conversion, differentiation, multiplication, and application to the Poisson equation with the Dirichlet and Neumann boundary conditions. In section \ref{sec3}, we will present the second-order BDF energy stable schemes with semi- and fully-discrete systems for the pACOK and pACNO equations in the disk domain and prove their energy stability. In section \ref{sec:numerical}, we will present some numerical experiments for the second-order convergence rate, the coarsening dynamics, and the equilibrium state of the pACOK and pACNO equations in the disk domain. These numerical findings will display single-bubble assembly for the OK model, and several types of novel pattern formation with both single and double bubble assemblies on the interior of the disk domain for the NO model.

\section{Preliminary Review}\label{sec2}

In the past decades, two main challenges are faced when dealing with functions in polar geometries: maintaining regularity at the original of the disk, and using fast transforms for numerical computation. Most methods \cite{Fornberg1995,Shen2000,EISEN_HEINRICHS_WITSCH1991} address only one of the challenges by representing functions with Chebyshev-Fourier expansion. Recently, \cite{Wilber_Townsend_Wright2017} proposed a novel approach using Chebyshev-Fourier expansion, providing three key benefits: a structure that allows for fast transforms, regularity at the origin of the disk, and near-optimal sampling. This method combines low-rank approximations for functions in polar geometries with an adaptive interpolation method that uses an iterative Gaussian elimination process and a strategy similar to the double Fourier sphere (DFS) method \cite{Fornberg1995}.

To approximate smooth functions $f(r,\theta)$ in polar geometries, low-rank approximation is adopted as a sum of a relatively small number $K$ of rank-1 functions to machine precision. To maintain regularity and symmetry of the function, \cite{Wilber_Townsend_Wright2017} defined the Type-II Block-Mirror-Centrosymmetric  (BMC-II) functions (\ref{def:BMC-II}) to represent the smooth functions on the unit disk, which guarantees that the function remains smooth and continuous at the origin and preserves its geometric properties. 
\begin{definition}[BMC-II function \cite{Wilber_Townsend_Wright2017}]\label{def:BMC-II}
A function $\tilde{f}:[-\pi,\pi]\times[-1,1]\to\mathbb{C}$ is a Type-II Block-Mirror-Centrosymmetric (BMC-II) function if there are functions $g,h:[0,\pi]\times[0,1]\to\mathbb{C}$ such that 
\begin{align}\label{BMC-structure}
\tilde{f} = \begin{bmatrix}
g & h \\
\mathrm{filp}(h) & \mathrm{flip}(g)
\end{bmatrix}
\end{align}
where $\mathrm{filp}$ is a glide reflection in group theory, and $f(\cdot,0) = \alpha$, where $\alpha$ is a constant. Note that (\ref{BMC-structure}) is called a BMC structure. A BMC function with $f(\cdot,0)$ equals a constant is BMC-II.
\end{definition}
To construct low-rank approximations of BMC-II functions while preserving the BMC-II structure, a parity-based interpretation of structure-preserving iterative Gaussian elimination procedure (GE) has been developed \cite{Wilber_Townsend_Wright2017}, which ensures that the pivot selection and each rank-1 update maintain the symmetry and regularity of the function. By using GE, the low-rank approximation of $\tilde{f}(r,\theta)$ is constructed by 
\begin{align*}
\tilde{f}(r,\theta) \approx \sum_{j=1}^{K}a_jc_j(r)d_j(\theta)
\end{align*}
where $a_j$ is a coefficient related to the GE pivots, $c_j(r)$ and $d_j(\theta)$ are the $j$th column slice and row slice, respectively, constructed during the GE procedure, and $K$ is a relatively small rank of the approximation \cite{TownsendTrefethen_SJSC2013}. 

The low-rank approximation method samples functions over the unit disk in a way that avoids oversampling near the origin, ensuring a near-optimal interpolation grid, and is particularly effective for operations such as point-wise evaluation, integration, and differentiation, simplifying them to essentially one-dimensional procedures.  Using this idea, Townsend \cite{Trefethen_ChebGuide2014} has built the computational frameworks for differentiation, integration, function evaluation, and vector calculus in polar geometries, which is publicly available through the open source Chebfun package written in MATLAB. In our work, we directly use this package to work with functions in polar geometries. This includes evaluating polar functions on mesh grids, converting polar functions from and to Chebyshev-Fourier basis, and computing integrals of polar functions.  

To efficiently solve the linear differential equations in Chebyshev-Fourier expansion with variable coefficients and general boundary conditions, Olver and Townsend \cite{Olver_Townsend2013} developed the ultraspherical spectral method, which results in the almost banded matrices to an almost banded, well-conditioned linear system. In the rest part of this section, we review this method, focusing on the matrix representations for the differentiation, multiplication, and conversion operators. We then discuss its application in the spectral collocation method \cite{ShenTangWang_Springer2011} for solving the boundary value problem with Dirichlet and Neumann boundary condition, and its extention to the 2D Helmholtz equation in the disk domain \cite{Wilber_Townsend_Wright2017}.

\subsection{Matrix operators for the ultrasherical spectral mehod}\label{subsec:matrixoperator}

The ultraspherical spectral method employs the differentiation rule \cite{Olver_Townsend2013} to create the recurrence relations of the $m$-th derivative of Chebyshev polynomials
\begin{align}\label{rel_ult_chebT}
    \frac{\text{d}^m}{\text{d}x^m}T_{n+m}(x) = 2^{m-1}(m-1)!(n+m)C_n^{(m)}(x), \ \ \forall m\in\mathbb{N}, \ n\geq0.
\end{align}
where $C_n^{(m)}(x)$ is the degree-$n$ ultraspherical polynomials of parameter $m$ \cite{Cohl_MacKenzie_Volkmer2013}, and these recurrence relations result in a sparse representation of differentiation operators for the $m$-th derivative of Chebyshev polynomials and ultraspherical polynomials. Denote $\mathcal{D}_{0,m}$ as the differentiation operator which maps a vector of Chebyshev ${T_n}$ coefficients to a vector of ultraspherical $C_n^{(m)}$ coefficients of the $m$-th derivative. The sparse representation matrix  is given by
\begin{align}\label{mat_diff_chebT_ult}
\mathcal{D}_{0,m} = 2^{m-1}(m-1)!
   \begin{pmatrix}
   \overbrace{
   \begin{array}{ccc}
       {0} & {\cdots} & {0} 
   \end{array}
   }^{m\ \text{times}}
          & m &  0       \\
             & 0  & m+1 &  0  \\
             &   &  0   & m+2 & 0 \\
             &   &     &  \ddots   & \ddots & \ddots  
   \end{pmatrix}, \ \ m \geq1.
\end{align}

Since the ultraspherical spectral method switches the coefficients between bases, it's important to find the operators that convert between the Chebyshev and ultraspherical bases. Under the recurrence relations in \cite{Cohl_MacKenzie_Volkmer2013},
\begin{align*}
    T_k =
    \left\{
    \begin{array}{lr}
       \frac{1}{2}\left(C_k^{(1)}-C_{k-1}^{(1)}\right),  & k\geq2, \\
        \frac{1}{2}C_1^{(1)}, & k=1, \\
        C_0^{(1)}, & k=0,
    \end{array}
    \right. \ \ \ \text{and} \ \ \ 
    C_k^{(m)} =
    \left\{
    \begin{array}{lr}
       \frac{m}{m+k}\left(C_k^{(m+1)}-C_{k-2}^{(m+1)}\right),  & k\geq2, \\
        \frac{m}{m+1}C_1^{(m+1)}, & k=1, \\
        C_0^{(m+1)}, & k=0,
    \end{array}
    \right.
\end{align*}
 $\mathcal{S}_{0,1}$  denote the conversion operator between the Chebyshev $T_k$ basis and ultraspherical $C^{(1)}$ basis, and $\mathcal{S}_{n,n+1}$ for $n\geq 1$ converts the ultraspherical $C^{(n)}$ and $C^{(n+1)}$ bases
\begin{align}\label{cov_01}
    \mathcal{S}_{0,1} = 
    \begin{pmatrix}
        1 & 0 & -\frac{1}{2} &       \\
          & \frac{1}{2} & 0 & -\frac{1}{2}    \\
          &   &  \frac{1}{2} & 0 & -\frac{1}{2}  \\
          &   &   &   \ddots & \ddots  & \ddots 
    \end{pmatrix}, \ \ \ 
        \mathcal{S}_{n,n+1} = 
    \begin{pmatrix}
        1 & 0 & -\frac{n}{n+2} &       \\
          & \frac{n}{n+1} & 0 & -\frac{n}{n+3}    \\
          &   &  \frac{n}{n+2} & 0 & -\frac{n}{n+4}  \\
          &   &   &   \ddots & \ddots  & \ddots 
    \end{pmatrix}, \ \ n \geq1.
\end{align}
Therefore, the conversion between the Chebyshev $T_k$ and ultraspherical $C^{(n)}$ basis is denoted by $\hat{\bm{u}}\to \mathcal{S}_{0,n}\hat{\bm{u}} = \mathcal{S}_{n-1,n}\mathcal{S}_{n-2,n-1}\cdots\mathcal{S}_{0,1}\hat{\bm{u}}$ where $\hat{\bm{u}} = [\hat{u}_1,\hat{u}_2,\hat{u}_3,\cdots]^T$ are the Chebyshev $T_k$ coefficients for the function $u(x)$.

To handle the multiplication of two functions of the form $a(x)u(x)$, we expand $a(x)$ and $u(x)$ as the Chebyshev series
\begin{align*}
    a(x) = \sum_{k=0}^{\infty}\hat{a}_kT_k(x), \ \ \ u(x) = \sum_{k=0}^{\infty}\hat{u}_kT_k(x).
\end{align*}
The product $a(x)u(x)$ involves multiplication of basis functions $T_j$ and $T_k$ with $j,k\geq0$, which leads to the Chebyshev coefficients of $a(x)u(x)$ as
\begin{align*}
    a(x)u(x) = \sum_{j=0}^{\infty}\sum_{k=0}^{\infty}\hat{a}_j\hat{u}_kT_j(x)T_k(x) = \sum_{k=0}^{\infty} \hat{b}_kT_k(x).
\end{align*}
Then, the new Chebyshev coefficients vector $\hat{\bm{b}} = [ \hat{b}_0,\hat{b}_1,\hat{b}_2,\cdots]^T$ can be expressed as the multiplication of a matrix $\mathcal{M}_T[a]$ and a coefficient vector $\hat{\bm{u}}$, i.e. $\hat{\bm{b}} = \mathcal{M}_T[a]\hat{\bm{u}}$ where the matrix operator $\mathcal{M}_T[a]$ is of the following form
\begin{align}\label{mat_mult_chebT}
    \mathcal{M}_T[a] = 
    \frac{1}{2}\begin{bmatrix}
    2\hat{a}_0 & \hat{a}_1  & \hat{a}_2  & \hat{a}_3  & \cdots \\
    \hat{a}_1  & 2\hat{a}_0 & \hat{a}_1  & \hat{a}_2  & \ddots \\
    \hat{a}_2  & \hat{a}_1  & 2\hat{a}_0 & \hat{a}_1  & \ddots \\
    \hat{a}_3  & \hat{a}_2  & \hat{a}_1  & 2\hat{a}_0 & \ddots \\
    \vdots     & \ddots     & \ddots     & \ddots     & \ddots
    \end{bmatrix}
    + \frac{1}{2} \begin{bmatrix}
    0 & 0  & 0  & 0  & \cdots \\
    \hat{a}_1  & \hat{a}_2 & \hat{a}_3  & \hat{a}_4  & \cdots \\
    \hat{a}_2  & \hat{a}_3 & \hat{a}_4  & \hat{a}_5  & \reflectbox{$\ddots$} \\
    \hat{a}_3  & \hat{a}_4 & \hat{a}_5  & \hat{a}_6  & \reflectbox{$\ddots$} \\
    \vdots     & \reflectbox{$\ddots$}     & \reflectbox{$\ddots$}     & \reflectbox{$\ddots$}     & \reflectbox{$\ddots$}
    \end{bmatrix}
\end{align}
Moreover, while requiring the ultraspherical expansion of $a(x)u(x)$, we can combine the Chebyshev multiplication operator and the conversion operator (\ref{cov_01}) between Chebyshev and ultraspherical bases to explore the ultraspherical coefficients of $a(x)u(x)$. For example, the $C^{(m)}$ expansion coefficient of $a(x)u(x)$ can be expressed as 
\begin{align}\label{coe_mult}
    \hat{\bm{c}} = \mathcal{S}_{m-1,m}\mathcal{S}_{m-2,m-1}\cdots\mathcal{S}_{0,1}[\mathcal{M}_T[a]\hat{\bm{u}}],
\end{align}
where $\hat{\bm{c}}$ is the vector of coefficients in ultraspherical $C^{(m)}$ basis.

\subsection{Boundary value problem with Dirichlet and Neumann boundary conditions}

In this subsection, we review the ultraspherical spectral collocation method applied to the boundary value problem:
\begin{align}\label{eqn:bvp}
    & u''(x) + u'(x) + a(x)u(x) = f(x), \ \ \ x \in[-1,1], \nonumber \\
    & \text{Dirichlet: } u(1) = u(-1) = 0, \text{or }  \\
    &  \text{Neumann: } u'(1) = u'(-1) = 0. \nonumber
\end{align}

Given a set of Gauss-Lobatto quadrature nodes and weights $\{x_j,\ \omega_j: = \omega(x_j)\}_{k=0}^N$, the solution $u\in C([a,b])$ of the boundary value problem is approximated by 
\begin{align}\label{coe_chebT}
    u(x) = \sum_{k=0}^{N}\hat{u}_k\phi_k(x), \ x \in [-1,1],
\end{align}
where the basis functions $\phi_k(x)$ are the Chebyshev or ultraspherical polynomials of degree-$k$, and it's corresponding coefficients $\hat{u}_k$ are determined by the Gauss-Lobatto points $\{x_k\}_{k=0}^N$, as detailed in \cite{ShenTangWang_Springer2011}.

Take the Chebyshev expansions of the function $u(x)$, $a(x)$, and $f(x)$:
\begin{align*}
    u(x) = \sum_{k=0}^{N}\hat{u}_kT_k(x),\ \ \ a(x) = \sum_{k=0}^{N}\hat{a}_kT_k(x),  \ \ \ f(x) = \sum_{k=0}^{N}\hat{f}_kT_k(x).
\end{align*}
Define $\hat{\bm{u}} = [\hat{u}_1,\hat{u}_2,\cdots,\hat{u}_N]^{T}$ and $\hat{\bm{f}} = [\hat{f}_1,\hat{f}_2,\cdots,\hat{f}_N]^{T}$ as the vector of coefficients in Chebyshev basis. In terms of differentiation operator in (\ref{mat_diff_chebT_ult}), we represent $\mathcal{D}_{01}\hat{\bm{u}}$ as the first order derivative in $C^{(1)}$ basis and $\mathcal{D}_{02}\hat{\bm{u}}$ as the second-order derivative in $C^{(2)}$ basis. In order to describe all the coefficients vector in the same basis, we need to multiply $\mathcal{S}_{12}$ to $\mathcal{D}_{01}\hat{\bm{u}}$, i.e. $\mathcal{S}_{12}\mathcal{D}_{01}\hat{\bm{u}}$ to convert the coefficients from $C^{(1)}$ basis to $C^{(2)}$ basis. Next, the coefficients of $a(x)u(x)$ in $C^{(2)}$ basis can be represented by $\mathcal{S}_{1,2}\mathcal{S}_{0,1}[\mathcal{M}_T[a]\hat{\bm{u}}]$ by (\ref{coe_mult}). Also, the coefficient vector of function $f(x)$ in $C^{(2)}$ basis can be written as $\mathcal{S}_{1,2}\mathcal{S}_{0,1}\hat{\bm{f}}$ which changes $\hat{\bm{f}}$ from $T$ basis to $C^{(2)}$ basis. Therefore, the equation in (\ref{eqn:bvp}) can be expressed by a linear matrix problem
\begin{align}\label{lin_mat}
    \mathcal{L}\hat{\bm{u}} = \mathcal{F},
\end{align}
where the matrix operator $\mathcal{L}$ and $\mathcal{F}$ are given by 
\begin{align*}
    \mathcal{L} = \mathcal{D}_{02} + \mathcal{S}_{12}\mathcal{D}_{01} + \mathcal{S}_{1,2}\mathcal{S}_{0,1}\mathcal{M}_T[a], \ \ \ \mathcal{F} = \mathcal{S}_{1,2}\mathcal{S}_{0,1}\hat{\bm{f}}.
\end{align*}
Furthermore, to impose the homogeneous Dirichlet and Neumann boundary conditions (\ref{eqn:bvp}), we remove the last two rows from $\mathcal{L}$ and $\mathcal{F}$, creating $\Bar{\mathcal{L}}$ and $\Bar{\mathcal{F}}$, respectively. For the homogeneous Dirichlet boundary condition, we observe that
\begin{align*}
 & u(1) = \sum_{k=0}^{N}\hat{u}_kT_k(1) = \sum_{k=0}^{N}\hat{u}_k = 0, \\
 & u(-1) = \sum_{k=0}^{N}\hat{u}_kT_k(-1) = \sum_{k=0}^{N}\hat{u}_k (-1)^k= 0.
\end{align*}
On the other hand, the homogeneous Neumann boundary condition yields
 \begin{align*}
 & u'(1) = \sum_{k=0}^{N}\hat{u}_k\left(\lim_{x\to1}T_k'(x)\right) = \sum_{k=0}^{N}\hat{u}_kk^2 = 0, \\
 & u'(-1) = \sum_{k=0}^{N}\hat{u}_k\left(\lim_{x\to-1}T_k'(x)\right) = \sum_{k=0}^{N}\hat{u}_k (-1)^{k+1}k^2= 0.
\end{align*}
This adjustment makes room for the boundaries as   
\begin{align}\label{eqn:mat_dir_neu}
    \begin{pmatrix}
         \Bar{\mathcal{L}}  \\
        \mathcal{B} 
    \end{pmatrix}\hat{\bm{u}} = 
    \begin{pmatrix}
        \Bar{\mathcal{F}} \\
\bm{0}
    \end{pmatrix},
\end{align}
where 
\begin{align}\label{eqn:B}
    \mathcal{B} = 
    \begin{bmatrix}  1 & 1 & 1 & \cdots & 1 \\
        1 & -1 & 1 & \cdots & (-1)^N  
    \end{bmatrix}, \text{ and }        
    \mathcal{B} = \begin{bmatrix}  0^2 & 1^2 & 2^2 & \cdots & N^2 \\
        -0^2 & 1^2 & -2^2 & \cdots & (-1)^{N+1}N^2   
    \end{bmatrix},
\end{align}     
denote the homogeneous Dirichlet and Neumann boundary conditions, respectively.

\subsection{Ultraspherical spectral method for Helmholtz equation in disk domain}\label{subsec:poisson}

Note that the key steps of solving pACOK (\ref{eqn:pACOK}) and pACNO (\ref{eqn:pACNO}) over disk domain is to find $(-\Delta)^{-1}u$ as defined in (\ref{eqn:LapInv}) and to solve for the equations as in (\ref{eqn:OK_fully_Scheme}, \ref{eqn:NO_fully_Scheme}). These involve solving the non-homogeneous Helmholtz equation over disk domain with the given boundary conditions in (\ref{eqn:LapInv}). We detail the numerical treatment as follows.

Consider the Helmholtz equation $-\Delta u + \alpha u  = f$ in polar coordinate, where $\alpha$ is a constant. We apply the disk analogue to the DFS method and extend the disk domain $\Omega = [0,1] \times [0,2\pi)$ to $\Tilde{\Omega} = [-1,1] \times [0,2\pi)$, with the extended Helmholtz equation  $-\Delta \tilde{u} + \alpha \tilde{u}  = \tilde{f}$. Here, $\tilde{f}$ is the BMC-II extension of $f$ from (\ref{def:BMC-II}), which satisfies the regularity over the origin of the unit disk and can be represented by     
\begin{align}\label{eqn:2Dpoisson}
   & -(r^2\tilde{u}_{rr}+r\tilde{u}_{r}+\tilde{u}_{\theta\theta}) + \alpha r^2\tilde{u}= r^2\tilde{f}, \ \ (r,\theta)\in \Tilde{\Omega} = [-1,1] \times [0,2\pi), \\
   & \tilde{u} \text{ is homogeneously Neumann in }r \text{ and periodic in }\theta,
\end{align}
where the multiplication by $r^2$ aims to deal with the singularity at $r=0$. The actual solution $u$ can be given by restricting $\tilde{u}$ to $\Omega = [0,1] \times [0,2\pi)$.

The solution $\tilde{u}$ in (\ref{eqn:2Dpoisson}) can be approximated by the Chebyshev-Fourier series as 
\begin{align}\label{eqn:CF_exp_BMCII}
    \tilde{u}(r,\theta) \approx \sum_{l=-\frac{N_{\theta}}{2}}^{\frac{N_{\theta}}{2}-1}\sum_{k=1}^{N_{r}}\hat{u}_{k,l}T_{k}(r) e^{\text{i}l\theta},
\end{align}
where $N_{\theta}$ is a positive even integer defined the number of uniform mesh points in $\theta$ and $N_{r}$ is a positive odd integer defined the number of Gaussian-Lobatto points in $r$. The Chebyshev-Fourier coefficients $\hat{u}_{k,l}$ can be solved by a system of equations from (\ref{eqn:2Dpoisson})
\begin{align}\label{eqn:coef_poisson}
   & - \sum_{k=0}^{N_{r}}\hat{u}_{k,l}\left[r^2T_{k}''(r) +rT_{k}'(r)-(l^2+\alpha)T_{k}(r)\right]e^{il\theta} = \sum_{k=0}^{N_{r}} \hat{f}_{k,l}r^2T_{k}(r)e^{il\theta}, \ \ \ l=-\frac{N_{\theta}}{2}:\frac{N_{\theta}}{2}-1.
\end{align}
Define a matrix operator $\mathcal{L}$ as 
\begin{align}
    \mathcal{L} = -\mathcal{M}_2[r^2]\mathcal{D}_{0,2} - \mathcal{S}_{1,2}\mathcal{M}_1[r]\mathcal{D}_{0,1} + (l^2+\alpha)\mathcal{S}_{0,2},  \ \ \ l=-\frac{N_{\theta}}{2}:\frac{N_{\theta}}{2}-1,
\end{align}
where $\mathcal{M}_2[r^2] = \mathcal{S}_{0,2}\mathcal{M}_T[r^2]\mathcal{S}_{2,0}$ is for the multiplication by $r^2$ in $C^{(2)}$ basis and $\mathcal{M}_1[r] = \mathcal{S}_{0,1}\mathcal{M}_T[r]\mathcal{S}_{1,0}$ is for the multiplication by $r$ in $C^{(1)}$ basis. Therefore, the linear system from (\ref{eqn:coef_poisson}) becomes 
\begin{align}\label{eqn:mat_poi_coef}
     \mathcal{L}
    \hat{\bm{u}}_{l} =  \mathcal{S}_{0,2}\mathcal{M}_T(r^2)\hat{\bm{f}}_{l}, \ \ l=-\frac{N_{\theta}}{2}:\frac{N_{\theta}}{2}-1.
\end{align}
where $\hat{\bm{u}}_l = [\hat{u}_{1,l},\hat{u}_{2,l},\cdots,\hat{u}_{N_r,l}]^{T}$ and $\hat{\bm{f}}_l = [\hat{f}_{1,l},\hat{f}_{2,l},\cdots,\hat{f}_{N_r,l}]^{T}$ as the vector of coefficients in Chebyshev-Fourier basis. Incorporating the homogeneous boundary condition $\mathcal{B}$ in (\ref{eqn:B})  gives us the system 
\begin{align}\label{eqn:lin_sym}
\begin{pmatrix}
          \mathcal{L}(1:\text{end}-2,:) \\
          \mathcal{B}
       \end{pmatrix} \hat{\bm{u}}_l = \begin{pmatrix}
           [\mathcal{S}_{0,2}\mathcal{M}_T(r^2)\hat{\bm{f}}_{l}](1:\text{end}-2,:)  \\
\bm{0}
\end{pmatrix}, \ \  l=-\frac{N_{\theta}}{2}:\frac{N_{\theta}}{2}-1.
\end{align}

There are two methods for solving the linear system (\ref{eqn:lin_sym}). The first one is the fast disk solver introduced by Townsend \cite{Wilber_Townsend_Wright2017}, which combines the alternating direction implicit (ADI) method with the ultraspherical spectral method, directly computing the low-rank approximations of the Chebyshev-Fourier coefficients $\hat{u}\in\mathbb{C}^{N_{r}\times N_{\theta}}$. The second method is to apply the Sherman–Morrison formula \cite{ShermanMorrison_AMS1950}, see details in \cite{Wilber_thesis2016}. This method solves the linear system (\ref{eqn:lin_sym}) separately for the even-indexed and odd-indexed terms of the Chebyshev-Fourier coefficients $\hat{u}_{2k,2l}$ and $\hat{u}_{2k+1,2l+1}$, reducing the computational complexity of the system. For the detailed discussion, we refer to \cite{Boyd2001,Fornberg1995,Trefethen2000,Shen2000}.

Wilber compared these two methods in \cite[Figure~7]{Wilber_Townsend_Wright2017}. For large mesh grid sizes, when the numerical rank of $\hat{u}$ is sufficiently low, the ADI method with low-rank approximation proves faster than the full-rank solver. However, for small to medium mesh sizes, when the rank is not sufficiently low, the second method is often preferred.

In our work, we often use medium-sized mesh of $512$ or $256$. Therefore, we apply the algorithm from \cite{Wilber_thesis2016} to the linear system (\ref{eqn:lin_sym}) with adjustments for the homogeneous Neumann boundary condition. Detailed application can be found in Section (\ref{subsec:FullyScheme}).

\section{Second Order Time-discrete Energy Stable Scheme}\label{sec3}

In this section, we aim to find the equilibrium state of the binary and ternary systems introduced by the OK and NO models in disk domain. By introducing the second order Backward Differentiation Formula (BDF) method \cite{Choi_Zhao2021} for temporal discretization and the ultraspherical spectral method for the spatial discretization for pACOK (\ref{eqn:pACOK}) and pACNO (\ref{eqn:pACNO}) equations on the unit disk, we study the energy stability for both semi- and fully-discrete schemes.

\subsection{Second order semi-discrete scheme for pACOK and pACON equation in disk domain}\label{Subsec:Semi}

For the OK model, we adopt the second-order BDF method from \cite{Choi_Zhao2021} for the pACOK equation (\ref{eqn:pACOK}) in the disk domain. For a given time interval $[0,T]$ and an integer $N>0$, we define a uniform time step size $\tau = \frac{T}{N}$ with $t_n = n\tau$ for $n = 0,1,\cdots,N$ and the approximate solution $u^n \approx u(r,\theta,t_n)$. Choosing the stabilizer 
\begin{align*}
    u^{n+1}-2u^{n}+u^{n-1},
\end{align*}
the second order BDF scheme for the pACOK equation follows: given initial data $u^{-1} = u^0 = u_0$, we can find $u^{n+1}$ for each $n = 1,2,\cdots,N$ such that
\begin{align}\label{eqn:pACOK_scheme}
    \frac{3u^{n+1}-4u^{n}+u^{n-1}}{2\tau}  = \ & \epsilon\Delta u^{n+1} - \frac{1}{\epsilon}\left[2W'(u^n)-W'(u^{n-1})\right] \nonumber\\
    & -\frac{\kappa}{\epsilon}\left(u^{n+1}-2u^{n}+u^{n-1}\right) -\gamma\beta(-\Delta)^{-1}\left(u^{n+1}-2u^{n}+u^{n-1}\right) \nonumber\\
    & -\gamma(-\Delta)^{-1}\left[2u^n-u^{n-1}-\omega\right] -M\left[\int_{\Omega}\left(2u^n-u^{n-1}\right)\ \text{d}x - \omega|\Omega|\right],
\end{align}
where $\frac{\kappa}{\epsilon}\left(u^{n+1}-2u^{n}+u^{n-1}\right)$ and $\gamma\beta(-\Delta)^{-1}\left(u^{n+1}-2u^{n}+u^{n-1}\right)$ control the energy stability of the system and $\kappa, \beta\geq0$ are stabilization constants.

For the NO model, the second order BDF scheme is used to explore the pACNO equation system in the disk domain over time interval $[0,T]$, and the uniform time step size $\tau = \frac{T}{N}$ with $t_n = n\tau$ for $n = 0,1,\cdots,N$ is defined by choosing an integer $N>0$. Let the approximate solutions $u^n_i \approx u(r,\theta,t_n)$ for $i=1,2$. Incorporating the stabilizer 
\begin{align*}
    u^{n+1}_i-2u^{n}_i+u^{n-1}_i,\ \ i = 1,2.
\end{align*} 
the second order BDF scheme for the pACON equation system becomes: given initial data $u_i^{-1} = u_i^0 = u_{i,0}$, the approximate solutions $u^{n+1}_i$, $i=1,2$ for $n = 1,2,\cdots,N$ can be found by 
\begin{align}\label{eqn:pACNO_scheme}
    &\frac{3u^{n+1}_i-4u^{n}_i+u^{n-1}_i}{2\tau} \nonumber\\ 
    =\ & \epsilon\Delta u^{n+1}_i + \frac{\epsilon}{2}\left(2\Delta u^{n+\text{mod}(i+1,2)}_{j}-\Delta u^{n-1+2\cdot\text{mod}(i+1,2)}_{j}\right) \nonumber \\
    & - \frac{1}{\epsilon}\left[2\frac{\partial W_2}{\partial u_{i}}(u^{n+\text{mod}(i+1,2)}_1,u^{n}_{2})-\frac{\partial W_2}{\partial u_{i}}(u^{n-1+2\cdot\text{mod}(i+1,2)}_1,u^{n-1}_{2})\right] \nonumber \\
    & -\frac{\kappa_i}{\epsilon}\left(u^{n+1}_i-2u^{n}_i+u^{n-1}_i\right) - \gamma_{ii}\beta_{i}(-\Delta)^{-1}\left(u^{n+1}_i-2u^{n}_i+u^{n-1}_i\right)  \nonumber\\
    & -\gamma_{ii}(-\Delta)^{-1}\left[2u^n_i-u^{n-1}_i-\omega_i\right]  -\gamma_{ij}(-\Delta)^{-1}\left[2u^{n+\text{mod}(i+1,2)}_j-u^{n-1+2\cdot\text{mod}(i+1,2)}_j-\omega_j\right]\nonumber\\
    & -M_i\left[\int_{\Omega}\left(2u^n_i-u^{n-1}_i\right)\ \text{d}x - \omega_i|\Omega|\right],
\end{align}
for $i = 1,2$ and $j\neq i$, and $\kappa_1,\kappa_2,\beta_1,\beta_1\geq0$ are stabilization constants.

Next, we can ensure that the second-order BDF schemes, (\ref{eqn:pACOK_scheme}) for pACOK and (\ref{eqn:pACNO_scheme}) for pACNO, always have a unique solution, as shown by the following lemma.

\begin{lemma}\label{thm:Solvable_time_dicrete} 
The second-order BDF schemes (\ref{eqn:pACOK_scheme}) for the pACOK equation and (\ref{eqn:pACNO_scheme}) for the pACNO equations at time-discrete level are uniquely solvable.
\end{lemma}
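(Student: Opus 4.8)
The plan is to recast each time-discrete scheme as a linear operator equation for the unknown $u^{n+1}$ (resp.\ $(u_1^{n+1},u_2^{n+1})$) with all explicit terms moved to the right-hand side, and then show the associated linear operator is invertible on the appropriate function space $\mathring H^s(\Omega)$. For the pACOK scheme \eqref{eqn:pACOK_scheme}, collecting the $u^{n+1}$ terms gives an equation of the form
\begin{align*}
\mathcal{A}u^{n+1} := \left(\frac{3}{2\tau} + \frac{\kappa}{\epsilon}\right)u^{n+1} - \epsilon\Delta u^{n+1} + \gamma\beta(-\Delta)^{-1}u^{n+1} = g^n,
\end{align*}
where $g^n$ depends only on $u^n$ and $u^{n-1}$. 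First I would observe that $\mathcal{A}$ is a self-adjoint, strictly positive operator: $-\Delta$ is nonnegative with Neumann boundary conditions, $(-\Delta)^{-1}$ is nonnegative on the zero-mean subspace, and the coefficient $\frac{3}{2\tau}+\frac{\kappa}{\epsilon}$ is strictly positive. Hence for any $u\neq 0$,
\begin{align*}
\langle \mathcal{A}u, u\rangle = \left(\frac{3}{2\tau}+\frac{\kappa}{\epsilon}\right)\|u\|_{L^2}^2 + \epsilon\|\nabla u\|_{L^2}^2 + \gamma\beta\|(-\Delta)^{-1/2}u\|_{L^2}^2 > 0,
\end{align*}
so $\mathcal{A}$ is coercive on $\mathring H^1(\Omega)$; by Lax--Milgram (or simply the spectral theorem, since $\mathcal{A}$ has compact resolvent and no zero eigenvalue) it is boundedly invertible, giving existence and uniqueness of $u^{n+1}$.

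For the pACNO system \eqref{eqn:pACNO_scheme}, the same idea applies but the implicit part couples $u_1^{n+1}$ and $u_2^{n+1}$ through the $\frac{\epsilon}{2}\Delta u_j^{n+1}$ cross term (which appears only for one of the two indices because of the $\mathrm{mod}(i+1,2)$ staggering). I would write the implicit operator in block form acting on $(u_1^{n+1},u_2^{n+1})$ and check it is positive definite; the relevant quadratic form involves the gradient energy $\frac{\epsilon}{2}(\|\nabla u_1\|^2 + \|\nabla u_2\|^2 + \nabla u_1\cdot\nabla u_2)$, which is positive definite because the matrix $\begin{pmatrix}1 & 1/2\\ 1/2 & 1\end{pmatrix}$ is positive definite, plus the strictly positive diagonal contributions from $\frac{3}{2\tau}+\frac{\kappa_i}{\epsilon}$ and the nonnegative nonlocal terms $\gamma_{ii}\beta_i(-\Delta)^{-1}$. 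One must confirm the staggering in the scheme does not make the leading implicit coupling non-symmetric in a way that destroys positivity; since the only genuinely implicit cross term is a Laplacian (self-adjoint) with the benign coefficient $\epsilon/2$, symmetry is preserved and coercivity on $\mathring H^1(\Omega)\times\mathring H^1(\Omega)$ follows.

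The main obstacle I anticipate is bookkeeping rather than conceptual: carefully separating, in \eqref{eqn:pACNO_scheme}, which terms are truly implicit (evaluated at $n+1$) versus explicit, given the index gymnastics with $\mathrm{mod}(i+1,2)$, so as to exhibit the correct symmetric positive-definite bilinear form; and verifying that $(-\Delta)^{-1}$ applied to zero-mean data stays in the zero-mean space so the quadratic form is well-defined and the operator maps $\mathring H^1$ into its dual consistently. Once the implicit operator is correctly identified as self-adjoint and positive definite, unique solvability is immediate from standard linear functional analysis, and I would state it via Lax--Milgram for cleanliness. I would close by remarking that the same positivity computation is exactly what drives the energy-stability estimates in the next subsection, so this lemma also serves as a warm-up for those.
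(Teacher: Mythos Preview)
Your argument for the pACOK scheme is correct and essentially the paper's: both rewrite the step as $\mathcal{A}u^{n+1}=\text{RHS}$ with $\mathcal{A}=\frac{3}{2\tau}+\frac{\kappa}{\epsilon}+\epsilon(-\Delta)+\gamma\beta(-\Delta)^{-1}$ and invoke positivity (the paper cites Green's identity for $-\Delta$; you spell out the full quadratic form and name Lax--Milgram).

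For the pACNO scheme your treatment has a misstep. Work out the staggering explicitly: for $i=1$ one has $\mathrm{mod}(2,2)=0$, so the cross Laplacian term is $\frac{\epsilon}{2}(2\Delta u_2^{\,n}-\Delta u_2^{\,n-1})$, entirely explicit; for $i=2$ one has $\mathrm{mod}(3,2)=1$, so it becomes $\frac{\epsilon}{2}(2\Delta u_1^{\,n+1}-\Delta u_1^{\,n+1})=\frac{\epsilon}{2}\Delta u_1^{\,n+1}$. Hence the implicit block operator is \emph{lower triangular},
\[
\begin{pmatrix}\mathcal{A}_1 & 0\\[2pt] -\tfrac{\epsilon}{2}\Delta & \mathcal{A}_2\end{pmatrix},
\]
not symmetric; your claim that ``symmetry is preserved'' is false. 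The paper exploits precisely this triangular (Gauss--Seidel) structure: one first solves for $u_1^{n+1}$ and then for $u_2^{n+1}$, each step being an instance of the scalar pACOK problem already handled. That is why the paper disposes of the pACNO case in one sentence as ``similar''.

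Your block Lax--Milgram route can still be salvaged, since Lax--Milgram does not require symmetry: on the diagonal the gradient contribution is $\epsilon\|\nabla u_1\|^2+\epsilon\|\nabla u_2\|^2+\frac{\epsilon}{2}\int\nabla u_1\cdot\nabla u_2$, whose symmetric coefficient matrix is $\bigl(\begin{smallmatrix}1&1/4\\1/4&1\end{smallmatrix}\bigr)$ (not $\bigl(\begin{smallmatrix}1&1/2\\1/2&1\end{smallmatrix}\bigr)$), still positive definite, and the identity terms $\frac{3}{2\tau}+\frac{\kappa_i}{\epsilon}$ give full coercivity. So there is no fatal gap, but recognizing the sequential decoupling gives the one-line argument the paper uses and eliminates exactly the ``bookkeeping'' obstacle you anticipated.
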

\begin{proof}
Since two schemes are similar, we only need to prove for the scheme (\ref{eqn:pACOK_scheme}) of pACOK. This scheme can be rewritten as 
\begin{align*}
\left(\dfrac{3}{2\tau}+\dfrac{\kappa}{\epsilon}+\epsilon(-\Delta)+\gamma\beta(-\Delta)^{-1}\right)u^{n+1} = \text{RHS},
\end{align*}
where $\text{RHS}$ is the nonlinear terms and only depends on $u^{n}$ and $u^{n-1}$. Then in the disk domain under the homogeneous Neumann boundary condition, the operator $(-\Delta)$ is positive-definite by the Green's identity. Therefore, the operator applying on $u^{n+1}$ is invertible, which shows the unique solvability of the schemes (\ref{eqn:pACOK_scheme}) and (\ref{eqn:pACNO_scheme}).
\end{proof}

Then, for the energy stability of the pACOK and pACNO equations (\ref{eqn:pACOK_scheme}, \ref{eqn:pACNO_scheme}), we have the following theorem.

\begin{theorem}\label{thm:OK/NO_energy} 
For pACOK equation, we define a modifying energy functional $\tilde{E}^{\text{pOK}}[u^{n},u^{n-1}]$ for the OK model
 \begin{align}\label{eqn:OK_modifiedenergy}
\tilde{E}^{\text{pOK}}(u^{n},u^{n-1} ) = E^{\text{pOK}}(u^n)+\left(\frac{\kappa}{2\epsilon}+\frac{1}{4\tau}+C\right)\|u^{n}-u^{n-1}\|_{L^2}^2+\frac{\gamma \beta}{2}\|(-\Delta)^{-\frac{1}{2}}\left(u^{n}-u^{n-1}\right)\|_{L^2}^2,
\end{align}
where $\kappa,\beta\geq0$, and the constant $C$ is given by 
\begin{align}\label{cst:pACOK_cont}
    C=\frac{L_{W''}}{2\epsilon}+\frac{\gamma}{2}\|(-\Delta)^{-1}\|_{L^2}+\frac{M}{2}|\Omega|.
\end{align}
Let $\{u^{n}\}_{n=1}^N$ be generated by scheme (\ref{eqn:pACOK_scheme}), and the time step size $\tau\leq\frac{1}{3C}$, then
\begin{align*}
    \tilde{E}^{\text{pOK}}[u^{n+1},u^{n}]\leq  \tilde{E}^{\text{pOK}}[u^{n},u^{n-1}].
\end{align*}
For pACNO equations,we define the modifying energy functional $ \tilde{E}^{\text{pNO}}[u_1^{n},u_1^{n-1},u_2^{n},u_2^{n-1}]$ as 
\begin{align}\label{eqn:mod_energy_pNO}
     \tilde{E}^{\text{pNO}}[u_1^{n},u_1^{n-1},u_2^{n},u_2^{n-1}]=& E^{\text{pNO}}[u_1^{n},u_2^{n}] \nonumber\\
    & +\sum_{i=1}^{2}\left[\left(\frac{\kappa_i}{2}+\frac{1}{4\tau}+C_i\right)\|u_i^{n}-u_i^{n-1}\|_{L^2}^2+\frac{\gamma_{ii} \beta_{i}}{2}\|(-\Delta)^{-\frac{1}{2}}(u_i^{n}-u_i^{n-1})\|^2_{L^2}\right],
\end{align}
where $\kappa_i,\beta_i\geq0$, $i=1,2$, and the constants $C_i$ are given by 
\begin{align}\label{cst:pNO}
C_i = \frac{L_{W''}}{2\epsilon}+\frac{\gamma_{i1}+\gamma_{i2}}{2} \|(-\Delta)^{-1}\|_{L^2}+\frac{M_i}{2}|\Omega|.
\end{align}
Let $\{u_1^{n},u_2^{n}\}_{n=1}^N$ be generated by scheme (\ref{eqn:pACNO_scheme}), and the time step size $\tau\leq \min\{\frac{1}{3C_1},\frac{1}{3C_2}\}$, then
\begin{align*}
     \tilde{E}^{\text{pNO}}[u_1^{n+1},u_1^{n},u_2^{n+1},u_2^{n}]\leq  \tilde{E}^{\text{pNO}}[u_1^{n},u_1^{n-1},u_2^{n},u_2^{n-1}],
\end{align*}
\end{theorem}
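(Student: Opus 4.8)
The plan is to prove the pACOK case in detail and then indicate that the pACNO case follows by an identical (componentwise) argument. The standard energy‑stability strategy for a second‑order BDF scheme with stabilizer is to take the $L^2$ inner product of the scheme \reff{eqn:pACOK_scheme} against the discrete velocity $u^{n+1}-u^n$, rather than against $u^{n+1}-u^{n-1}$, so as to produce telescoping squared differences. First I would rewrite the left‑hand side $\frac{3u^{n+1}-4u^n+u^{n-1}}{2\tau}$ using the algebraic identity $3a-4b+c = (a-b) + 2(a-b) - (b-c)$ paired with the polarization identity
\begin{align*}
 2\langle 2(u^{n+1}-u^n)-(u^n-u^{n-1}),\, u^{n+1}-u^n\rangle = \|u^{n+1}-u^n\|_{L^2}^2 - \|u^n-u^{n-1}\|_{L^2}^2 + \|u^{n+1}-2u^n+u^{n-1}\|_{L^2}^2,
\end{align*}
which yields a telescoping term $\frac{1}{4\tau}\big(\|u^{n+1}-u^n\|^2 - \|u^n-u^{n-1}\|^2\big)$ plus the nonnegative leftover $\frac{1}{4\tau}\|u^{n+1}-2u^n+u^{n-1}\|^2$ plus $\frac{1}{2\tau}\|u^{n+1}-u^n\|^2$.

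Next I would handle the linear implicit terms on the right. The diffusion term $\epsilon\langle \Delta u^{n+1}, u^{n+1}-u^n\rangle$ is treated by the identity $2\langle -\Delta u^{n+1}, u^{n+1}-u^n\rangle = \|\nabla u^{n+1}\|^2 - \|\nabla u^n\|^2 + \|\nabla(u^{n+1}-u^n)\|^2$ under the homogeneous Neumann boundary condition (no boundary terms appear), giving the telescoping gradient part of $E^{\text{pOK}}$ plus a nonnegative remainder. The two stabilizer terms $-\frac{\kappa}{\epsilon}(u^{n+1}-2u^n+u^{n-1})$ and $-\gamma\beta(-\Delta)^{-1}(u^{n+1}-2u^n+u^{n-1})$ are each paired with $u^{n+1}-u^n$; writing $u^{n+1}-2u^n+u^{n-1} = (u^{n+1}-u^n)-(u^n-u^{n-1})$ and applying polarization (in the $L^2$ inner product for the first, in the $(-\Delta)^{-1}$‑weighted inner product $\langle (-\Delta)^{-1}\cdot,\cdot\rangle = \|(-\Delta)^{-1/2}\cdot\|^2$ for the second) produces exactly the telescoping terms $\frac{\kappa}{2\epsilon}\|u^{n+1}-u^n\|^2$ and $\frac{\gamma\beta}{2}\|(-\Delta)^{-1/2}(u^{n+1}-u^n)\|^2$ appearing in $\tilde E^{\text{pOK}}$, again up to nonnegative squared‑difference remainders that can be discarded.

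The remaining (explicit, second‑order‑extrapolated) terms are the ones that generate the constant $C$. For the double‑well part I would Taylor‑expand: $\frac{1}{\epsilon}W(u^{n+1}) - \frac{1}{\epsilon}W(u^n) \le \frac{1}{\epsilon}\langle W'(u^{n+1}), u^{n+1}-u^n\rangle$ is the wrong direction, so instead I estimate $\frac{1}{\epsilon}\big(W(u^{n+1})-W(u^n)\big) \le \frac{1}{\epsilon}\langle 2W'(u^n)-W'(u^{n-1}), u^{n+1}-u^n\rangle + \frac{L_{W''}}{2\epsilon}\big(\|u^{n+1}-u^n\|^2 + \|u^n-u^{n-1}\|^2\big)$, using $\|W''\|_{L^\infty}=L_{W''}$ (this is where the quadratic modification of $W$ is essential) together with the standard estimate on the second‑order extrapolation error; the two mismatched $\|u^n-u^{n-1}\|^2$ terms are absorbed into the coefficient $\frac{L_{W''}}{2\epsilon}$ inside $C$ in $\tilde E$. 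The long‑range term is quadratic, so $\frac{\gamma}{2}\big(\|(-\Delta)^{-1/2}(u^{n+1}-\omega)\|^2 - \|(-\Delta)^{-1/2}(u^n-\omega)\|^2\big) \le \gamma\langle (-\Delta)^{-1}(2u^n-u^{n-1}-\omega), u^{n+1}-u^n\rangle + \text{(correction bounded by } \tfrac{\gamma}{2}\|(-\Delta)^{-1}\|_{L^2}(\|u^{n+1}-u^n\|^2+\|u^n-u^{n-1}\|^2))$, by exact quadratic expansion plus Cauchy–Schwarz and boundedness of $(-\Delta)^{-1}$; similarly the penalty term $\frac{M}{2}(\int u - \omega|\Omega|)^2$ expands exactly, with correction controlled by $\frac{M}{2}|\Omega|$ via Cauchy–Schwarz on $\Omega$. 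Summing everything, matching each explicit term to its scheme contribution, the terms linear in $u^{n+1}-u^n$ cancel against the RHS of the scheme; what survives is $\tilde E^{\text{pOK}}[u^{n+1},u^n] - \tilde E^{\text{pOK}}[u^n,u^{n-1}] \le -\big(\tfrac{1}{2\tau} - \tfrac{3C}{2}\big)\|u^{n+1}-u^n\|^2 + (\text{nonnegative discarded remainders with negative sign})$, and the hypothesis $\tau \le \frac{1}{3C}$ makes $\frac{1}{2\tau} - \frac{3C}{2} \ge 0$, giving the claim. I expect the main obstacle to be exactly this bookkeeping: carefully choosing to test against $u^{n+1}-u^n$, tracking which squared‑difference remainders have the right (negative, discardable) sign, and verifying that the accumulated coefficients of $\|u^{n+1}-u^n\|^2$ from the three explicit nonlinear/nonlocal terms sum to precisely $\frac{3C}{2}$ with the definition \reff{cst:pACOK_cont}, rather than this being a conceptually hard step. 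For pACNO one runs the same argument on each equation $i=1,2$, tests the $i$‑th scheme against $u_i^{n+1}-u_i^n$, uses that the cross gradient term $\frac{\epsilon}{2}\nabla u_1\cdot\nabla u_2$ and the cross long‑range terms $\gamma_{ij}$ are handled by the same polarization/quadratic‑expansion identities (the symmetric $\gamma_{ij}=\gamma_{ji}$ and the structure of $W_2$ ensure the linear terms again cancel), and sums over $i$; the constants $C_i$ in \reff{cst:pNO} arise exactly as $C$ does above, now with $\gamma_{i1}+\gamma_{i2}$ in place of $\gamma$.
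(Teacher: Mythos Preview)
Your approach is exactly the paper's: test the scheme against $u^{n+1}-u^n$, split the BDF2 left side via polarization into a telescope $\frac{1}{4\tau}(\|u^{n+1}-u^n\|^2-\|u^n-u^{n-1}\|^2)$ plus a dissipative $\frac{1}{\tau}\|u^{n+1}-u^n\|^2$ (note: $\frac{1}{\tau}$, not $\frac{1}{2\tau}$ --- your polarization identity for $2\langle 2p-q,p\rangle$ is off by $2\|p\|^2$), treat the implicit diffusion and both stabilizers by the same polarization identities, and bound each second-order-extrapolated explicit term by its energy increment plus a remainder of size $\tfrac{L_{W''}}{2\epsilon}$, $\tfrac{\gamma}{2}\|(-\Delta)^{-1}\|_{L^2}$, $\tfrac{M}{2}|\Omega|$ times $(2\|u^{n+1}-u^n\|^2+\|u^n-u^{n-1}\|^2)$ (again a factor of $2$ on the $\|u^{n+1}-u^n\|^2$ term, not $1$). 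With those two corrected factors of $2$ the final line reads $\tilde E^{\text{pOK}}[u^{n+1},u^n]-\tilde E^{\text{pOK}}[u^n,u^{n-1}]\le (3C-\tfrac{1}{\tau})\|u^{n+1}-u^n\|^2$, which is the paper's conclusion; your slips happen to cancel, so the outcome and the method are the same.
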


\begin{proof}
The proofs of the energy stability for the pACOK and pACNO equations are similar. The main difference is that for the pACNO schemes, we deal with $i=1,2$ separately and then add them up. Therefore, we omit the proof for the pACOK equation and provide a detailed proof for the pACNO equations. Note that this proof is similar to the one for the square case by Zhao and Choi \cite{Choi_Zhao2021}.

By taking $L^2$ inner product of the two equations in (\ref{eqn:pACNO_scheme}) with $u_{i}^{n+1}-u_{i}^n$ for $i=1,2$, and using the identities $a\cdot(a-b)=\frac{1}{2}a^2-\frac{1}{2}b^2+\frac{1}{2}(a-b)^2$ and $b\cdot(a-b)=\frac{1}{2}a^2-\frac{1}{2}b^2-\frac{1}{2}(a-b)^2$, with $a=u_i^{n+1}-u_i^{n}$, $b=u_i^{n}-u_i^{n-1}$, we obtain two equations:
\begin{align}
   & \frac{1}{\tau}\|u_{1}^{n+1}-u_{1}^{n}\|_{L^2}^2+\frac{1}{4\tau}\left(\|u_{1}^{n+1}-u_{1}^{n}\|_{L^2}^2-\|u_{1}^{n}-u_{1}^{n-1}\|_{L^2}^2+\|u_{1}^{n+1}-2u_{1}^{n}+u_{1}^{n-1}\|_{L^2}^2\right) \nonumber \\
 = & -\frac{\epsilon}{2}\left(\left\langle-\Delta u_{1}^{n+1},u_{1}^{n+1}\right\rangle-\left\langle-\Delta u_{1}^{n},u_{1}^{n}\right\rangle +\left\langle-\Delta(u_{1}^{n+1}-u_{1}^{n}),u_{1}^{n+1}-u_{1}^{n}\right\rangle + \left\langle -\Delta \left(2u_{2}^{n} -  u_{2}^{n-1}\right),u_{1}^{n+1}-u_{1}^{n}\right\rangle \right) \nonumber\\
    & \underbrace{- \frac{1}{\epsilon}\left\langle 2\frac{\partial W_2}{\partial u_1}(u_1^n, u_2^n) - \frac{\partial W_2}{\partial u_1}(u_1^{n-1}, u_2^{n-1}), u_1^{n+1} - u_{1}^{n}\right\rangle}_{\text{I(a)}} \nonumber\\
    & -\frac{\kappa_{1}}{2\epsilon}\left(\|u_{1}^{n+1}-u_{1}^n\|_{L^2}^2+\|u_{1}^{n}-u_{1}^{n-1}\|_{L^2}^2-\|u_{1}^{n+1}-2u_{1}^n+u_{1}^{n-1}\|_{L^2}^2 \right) \nonumber\\
    & -\frac{\gamma_{11} \beta_{1}}{2}\left(\|(-\Delta)^{-\frac{1}{2}}(u_1^{n+1}-u_1^{n})\|_{L^2}^2-\|(-\Delta)^{-\frac{1}{2}}(u_1^{n}-u_1^{n-1})\|_{L^2}^2 + \|(-\Delta)^{-\frac{1}{2}}(u_1^{n+1}-2u_1^{n}+u_1^{n-1})\|_{L^2}^2\right) \nonumber\\
    & \underbrace{-\gamma_{11}\left\langle(-\Delta)^{-1}\left(2u_1^{n}-u_1^{n-1}-\omega_1\right),u_1^{n+1}-u_1^{n}\right\rangle}_{\text{II(a)}}\underbrace{ -\gamma_{12}\left\langle(-\Delta)^{-1}\left(2u_2^{n}-u_2^{n-1}-\omega_2\right),u_1^{n+1}-u_1^{n}\right\rangle }_{\text{III(a)}}\nonumber\\
    & \underbrace{-M_1\left(\int_{\Omega} (2u_1^n-u_1^{n-1})\ \text{d}A - \omega_1|\Omega|\right)\left\langle 1,u_1^{n+1}-u_1^{n}\right\rangle}_{\text{IV(a)}}; \label{eqn:pACNO_i=1} 
    \end{align}
    \begin{align}
    & \frac{1}{\tau}\|u_{2}^{n+1}-u_{2}^{n}\|_{L^2}^2+\frac{1}{4\tau}\left(\|u_{2}^{n+1}-u_{2}^{n}\|_{L^2}^2-\|u_{2}^{n}-u_{2}^{n-1}\|_{L^2}^2+\|u_{2}^{n+1}-2u_{2}^{n}+u_{2}^{n-1}\|_{L^2}^2\right) \nonumber\\
 = & -\frac{\epsilon}{2}\left(\left\langle-\Delta u_{2}^{n+1},u_{2}^{n+1}\right\rangle-\left\langle-\Delta u_{2}^{n},u_{2}^{n}\right\rangle+\left\langle-\Delta(u_{2}^{n+1}-u_{2}^{n}),u_{2}^{n+1}-u_{2}^{n}\right\rangle+\left\langle -\Delta u_{1}^{n+1} ,u_{2}^{n+1}-u_{2}^{n}\right\rangle \right) \nonumber\\
    &  \underbrace{- \frac{1}{\epsilon}\left\langle 2\frac{\partial W_2}{\partial u_2}(u_1^{n+1}, u_2^n) - \frac{\partial W_2}{\partial u_2}(u_1^{n+1}, u_2^{n-1}), u_2^{n+1} - u_{2}^{n}\right\rangle}_{I(b)} \nonumber\\
    & -\frac{\kappa_{2}}{2\epsilon}\left(\|u_{2}^{n+1}-u_{2}^n\|_{L^2}^2+\|u_{2}^{n}-u_{2}^{n-1}\|_{L^2}^2-\|u_{2}^{n+1}-2u_{2}^n+u_{2}^{n-1}\|_{L^2}^2 \right) \nonumber\\
    & -\frac{\gamma_{22} \beta_{2}}{2}\left(\|(-\Delta)^{-\frac{1}{2}}(u_2^{n+1}-u_2^{n})\|_{L^2}^2-\|(-\Delta)^{-\frac{1}{2}}(u_2^{n}-u_2^{n-1})\|_{L^2}^2 + \|(-\Delta)^{-\frac{1}{2}}(u_2^{n+1}-2u_2^{n}+u_2^{n-1})\|_{L^2}^2\right) \nonumber\\
    & \underbrace{-\gamma_{22}\left\langle(-\Delta)^{-1}\left(2u_2^{n}-u_2^{n-1}-\omega_2\right),u_2^{n+1}-u_2^{n}\right\rangle}_{\text{II(b)}} \underbrace{-\gamma_{21}\left\langle(-\Delta)^{-1}\left(u_1^{n+1}-\omega_1\right),u_2^{n+1}-u_2^{n}\right\rangle}_{\text{III(b)}} \nonumber\\
    & \underbrace{-M_2\left(\int_{\Omega} (2u_2^n-u_2^{n-1})\ \text{d}A - \omega_2|\Omega|\right)\left\langle 1,u_2^{n+1}-u_2^{n}\right\rangle}_{\text{IV(b)}}. \label{eqn:pACNO_i=2}
\end{align}
Next, we add the terms of equations (\ref{eqn:pACNO_i=1}) and (\ref{eqn:pACNO_i=2}) one by one. The combined terms II(a)+II(b) and IV(a)+IV(b) can be written as
\begin{align*}
   \text{II(a)} + \text{II(b)} 
     = & -\sum_{i=1}^{2}\frac{\gamma_{ii}}{2}\left(\left\langle(-\Delta)^{-1}[u^{n+1}-\omega],u^{n+1}-\omega\right\rangle-\left\langle(-\Delta)^{-1}[u^{n}-\omega],u^{n}-\omega\right\rangle\right) \\
     & +\sum_{i=1}^{2}\frac{\gamma_{ii}}{2}\left(\left\langle(-\Delta)^{-1}[u^{n+1}-u^n],u^{n+1}-u^n\right\rangle - 2\left\langle(-\Delta)^{-1}[u^n-u^{n-1}],u^{n+1}-u^n\right\rangle\right)\\
     \leq & -\sum\limits_{i=1}^{2}\frac{\gamma_{ii}}{2}\left(\left\langle(-\Delta)^{-1}\left(u_i^{n+1}-\omega_i\right),u_i^{n+1}-\omega_i\right\rangle -\left\langle(-\Delta)^{-1}\left(u_i^{n}-\omega_i\right),u_i^{n}-\omega_i\right\rangle \right)\\
 & + \sum\limits_{i=1}^{2}\frac{\gamma_{ii}}{2} \|(-\Delta)^{-1}\|_{L^2} \left(\|u^{n}-u^{n-1}\|_{L^2}^2+2\|u^{n+1}-u^n\|_{L^2}^2\right), \\
      \text{IV(a)} + \text{IV(b)} 
     = & -\sum_{i=1}^{2}\frac{M_i}{2} \left(\left\langle u^{n+1} - \omega,1\right\rangle^2 - \left\langle u^n -\omega ,1\right\rangle^2\right)\\
        & +\sum_{i=1}^{2}\frac{M_i}{2} \left(\left\langle 1,u^{n+1}-u^{n}\right\rangle^2-2\left\langle u^n-u^{n-1},1\right\rangle \left\langle 1,u^{n+1}-u^{n}\right\rangle\right) \\
        \leq & -\sum\limits_{i=1}^{2} \frac{M_{i}}{2} \left(\left\langle u_{i}^{n+1} - \omega_i,1\right\rangle^2 - \left\langle u_i^n -\omega_i ,1\right\rangle^2\right)  \\
        &+ \sum\limits_{i=1}^{2} \frac{M_i}{2}|\Omega|\left(\|u_i^{n}-u_i^{n-1}\|_{L^2}^2+2\|u_i^{n+1}-u_i^n\|_{L^2}^2\right).
\end{align*}
The other terms, I(a)+I(b) and III(a)+III(b), which include the coupling terms between $u_1$ and $u_2$, can be estimated by Taylor expansion and Young's inequality. The estimations state as following:
\begin{align*}
   & \text{I(a)} + \text{I(b)} \\
\leq & -\frac{1}{\epsilon}\left\langle W_2(u_1^{n+1},u_2^{n+1}),1\right\rangle + \frac{1}{\epsilon}\left\langle W_2(u_1^{n},u_2^{n}),1\right\rangle + \frac{L_{W''}}{2\epsilon}\sum\limits_{i=1}^{2}\left(\|u^{n}-u^{n-1}\|_{L^2}^2+2\|u^{n+1}-u^n\|_{L^2}^2\right);\\
& \text{III(a)} + \text{III(b)} \\
\leq & -\gamma_{21}\left\langle (-\Delta)^{-\frac{1}{2}}\left(u_2^{n+1}-\omega_2\right),(-\Delta)^{-\frac{1}{2}}\left(u_1^{n+1}-\omega_1\right)\right\rangle + \gamma_{12}\left\langle (-\Delta)^{-\frac{1}{2}}\left(u_1^{n}-\omega_1\right),(-\Delta)^{-\frac{1}{2}}\left(u_2^{n}-\omega_2\right)\right\rangle \\
& + \frac{1}{2} \|(-\Delta)^{-1}\|_{L^2} \left(\gamma_{21}\|u_2^{n}-u_{2}^{n-1}\|_{L^2}^2 + \gamma_{12}\|u_1^{n+1}-u_{1}^{n}\|_{L^2}^2\right).
\end{align*} 
Note that the energy functional (\ref{eqn:pNO}) of the NO model can be written as
\begin{align*}
E^{\text{pNO}}(u_1^{n},u_2^{n}) = & \frac{\epsilon}{2}\left(\left\langle-\Delta u_1^{n},u_1^{n}\right\rangle+\left\langle-\Delta u_2^{n},u_2^{n}\right\rangle+\left\langle \nabla u_1^{n},\nabla u_2^{n}\right\rangle\right)+\frac{1}{\epsilon}\left\langle W_2(u_1^{n},u_2^{n}),1 \right\rangle \\
& + \sum\limits_{i,j=1}^{2}\frac{\gamma_{ij}}{2}\left\langle(-\Delta)^{-\frac{1}{2}}\left(u_i^{n}-\omega_i\right),(-\Delta)^{-\frac{1}{2}}\left(u_j^{n}-\omega_j\right)\right\rangle + \sum\limits_{i=1}^{2}\frac{M_i}{2} \left\langle u_i^{n}-\omega_i, 1 \right\rangle^2,
\end{align*}
Now, rewriting the summation and drop the unnecessary terms, we can get the following inequality
\begin{align*}
&\left[E^{\text{pNO}}(u_1^{n+1},u_2^{n+1})+\sum\limits_{i=1}^{2}\left(\frac{\kappa_i}{2\epsilon}+\frac{1}{4\tau}\right)\|u_i^{n+1}-u_i^n\|_{L^2}^2+\sum\limits_{i=1}^{2}\frac{\gamma_{ii} \beta_i}{2}\left\langle(-\Delta)^{-1}(u_i^{n+1}-u_i^{n}),u_i^{n+1}-u_i^n\right\rangle\right] \\
& -\left[E^{\text{pNO}}(u_1^{n},u_2^{n})+\sum\limits_{i=1}^{2}\left(\frac{\kappa_i}{2\epsilon}+\frac{1}{4\tau}\right)\|u_i^{n}-u_i^{n-1}\|_{L^2}^2+\sum\limits_{i=1}^{2}\frac{\gamma_{ii} \beta_i}{2}\left\langle(-\Delta)^{-1}(u_i^{n}-u_i^{n-1}),u_i^{n}-u_i^{n-1}\right\rangle\right] \\
 \leq &\sum\limits_{i=1}^{2}\left(\frac{L_{W''}}{2\epsilon}+\frac{\gamma_{ii}}{2} \|(-\Delta)^{-1}\|_{L^2}+\frac{M_i}{2}|\Omega|\right)\left(\|u_i^{n}-u_i^{n-1}\|_{L^2}^2+2\|u_i^{n+1}-u_i^n\|_{L^2}^2\right) \\
 & + \frac{1}{2} \|(-\Delta)^{-1}\|_{L^2} \left(\gamma_{21}\|u_2^{n}-u_{2}^{n-1}\|_{L^2}^2 + \gamma_{12}\|u_1^{n+1}-u_{1}^{n}\|_{L^2}^2\right) - \sum\limits_{i=1}^{2}\frac{1}{\tau}\|u_i^{n+1}-u_{i}^{n}\|_{L^2}^2 \\
 \leq & \sum\limits_{i=1}^{2} \left(C_i\left(\|u_i^{n}-u_i^{n-1}\|_{L^2}^2+2\|u_i^{n+1}-u_i^n\|_{L^2}^2\right) - \frac{1}{\tau}\|u_i^{n+1}-u_{i}^{n}\|_{L^2}^2\right)
\end{align*}
where $C_i$ are given by
\begin{align}\label{cst:pNO}
C_i = \frac{L_{W''}}{2\epsilon}+\frac{\gamma_{i1}+\gamma_{i2}}{2} \|(-\Delta)^{-1}\|_{L^2}+\frac{M_i}{2}|\Omega|,
\end{align}
where $L_{W''}$ and $\|(-\Delta)^{-1}\|_{L^2}$ are constants which can be found in Section (\ref{subsec:notation}). 

Adding $ \sum\limits_{i=1}^{2}C_i\left(\|u_i^{n+1}-u_i^n\|_{L^2}^2-\|u_i^{n}-u_i^{n-1}\|_{L^2}^2\right)$ to both side of the above inequality, we can find that 
\begin{align*}
 \tilde{E}^{\text{pNO}}[u_1^{n+1},u_1^{n},u_2^{n+1},u_2^{n}] - \tilde{E}^{\text{pNO}}[u_1^{n},u_1^{n-1},u_2^{n},u_2^{n-1}] \leq  \sum\limits_{i=1}^{2} \left(3C_i-\frac{1}{\tau}\right)\|u_i^{n+1}-u_i^n\|_{L^2}^2,
\end{align*}
where the modified energy functional for the pACNO defined in (\ref{eqn:mod_energy_pNO}). Therefore, if $3C_i-\frac{1}{\tau} \leq 0$, which mean $\tau\leq\min\{\frac{1}{3C_1},\frac{1}{3C_2}\}$, then 
\begin{align*}
     \tilde{E}^{\text{pNO}}[u_1^{n+1},u_1^{n},u_2^{n+1},u_2^{n}]\leq  \tilde{E}^{\text{pNO}}[u_1^{n},u_1^{n-1},u_2^{n},u_2^{n-1}].
\end{align*}
and the energy stability for the pACNO system is proved.
\end{proof}

\subsection{Fully-discrete scheme for pACOK and pACNO equations in the disk domain}
In this section, we apply the ultraspherical spectral method in the disk domain to construct the fully-discrete schemes for both pACOK and pACNO equations.

\subsubsection{Ultraspherical spectral method for spatial discretization}\label{subsection:spatial discretization}
For the pACOK/pACNO equations in the disk domain $\Omega = [0,1]\times [0,2\pi)$, we use the Chebyshev-Fourier expansion in the spatial domain to express the function $f$ as:
\begin{align}\label{eqn:Cheb_Four_Exp}
    f(r,\theta) = \sum_{j,k}\hat{f}_{jk}T_k(r)e^{\text{i}j\theta}, 
\end{align}
where $T_k(r)$ are the Chebyshev polynomials of the first kind. Next, instead of $\Omega = [0,1]\times [0,2\pi)$, we focus on an extended domain $\Tilde{\Omega}= [-1,1]\times [0,2\pi)$ which alleviates oversampling near the origin and reduces the computational complexity by even-odd parity.

We discretize the spatial operators using the spectral collocation approximation. Let $N_r$ be a positive odd integer and $N_{\theta}$ be a positive even integer, the collocation points are Chebyshev-Gaussian-Lobatto points $r_i = \cos{\frac{i\pi}{N_r}}$ with $i = 0,1,\cdots,N_r$ in $r$-direction and the uniform mesh in $\theta$-direction with mesh size $h = \frac{2\pi}{N_{\theta}}$. The set of collocation points is formulated as $\Tilde{\Omega}_{h} = \{(r_i,\theta_j); \ i = 0:N_{r},j = 1:N_{\theta}\}$. Then we define the index set:
\begin{align*}
    & S_{h} = \{(i,j)\in\mathbb{Z}^2;\ i = 0:N_{r},j = 1:N_{\theta}\}, \\
    & \hat{S}_h = \{(k,l)\in\mathbb{Z}^2;\ k = 0:N_{r},l = -\frac{N_{\theta}}{2}+1:\frac{N_{\theta}}{2}\}.
\end{align*}
Denote $\mathcal{M}_{h}$ as the collection of grid functions with zero mean defined on $\Tilde{\Omega}_{h}$:
\begin{align*}
    \mathcal{M}_{h} = \{f:\Tilde{\Omega}_{h}\to \mathbb{R} | \ f_{i,j+nN_{\theta}} = f_{i,j}, \forall(i,j)\in S_{h}, \forall n\in\mathbb{Z}, \ \hat{f}_{00} = 0\}.
\end{align*}
For any $f,g\in \mathcal{M}_{h}$ and $\mathbf{f} = (f^1,f^2)^T, \mathbf{g} = (g^1,g^2)^T\in \mathcal{M}_{h}\times\mathcal{M}_{h}$, we define the discrete $L^2$ inner product $\langle\cdot,\cdot\rangle_{h}$, discrete $L^2$-norm $||\cdot||_{L^2,h}$, and discrete $L^{\infty}$-norm $||\cdot||_{L^{\infty},h}$ as follows:
\begin{align*}
   & \langle f,g \rangle_{h} = h_{\theta}\sum_{(i,j)\in S_h}f_{ij}g_{ij}\omega_{i}, \ \ ||f||_{L^2,h} = \sqrt{\langle f,f \rangle_{h}}, \ \ ||f||_{L^{\infty},h} = \max_{(i,j)\in S_h}|f_{ij}|, \\
   &  \langle \mathbf{f},\mathbf{g} \rangle_{h} = h_{\theta}\sum_{(i,j)\in S_h}(f^1_{ij}g^1_{ij}+f^2_{ij}g^2_{ij})\omega_{i}, \ \ ||\mathbf{f}||_{L^2,h} = \sqrt{\langle \mathbf{f},\mathbf{f} \rangle_{h}}.
\end{align*}
where $\omega_{i}$ is the Chebyshev weights such that $\omega_{i} = \frac{\pi}{\Tilde{c}_{i}N_r}, \ i = 0:N_r$ and $\Tilde{c}_{0}=\Tilde{c}_{N_r}=2, \ \Tilde{c}_{i}=1, i=1:N_r-1$. Note that $\omega_i$ in the model formulation (\ref{eqn:pACNO}), with a slight abuse of notation, represents the volume fraction of $u_i$. The specific meaning of this notation can be determined by the context.

The 2D discrete Chebyshev-Fourier transform of a function $f\in \mathcal{M}_h$ is defined as
\begin{align*}
    \hat{f}_{kj} = \frac{2}{\Tilde{c}_kN_rN_{\theta}} \sum_{(i,j)\in S_h}\frac{f_{ij}}{\Tilde{c}_j}T_k(r_i)e^{-\text{i}l\theta_j}, \ \ (k,l)\in\hat{S}_h,
\end{align*}
and its inverse transform is given by
\begin{align*}
    f_{ij} = \sum_{(k,l)\in \hat{S}_h} \hat{f}_{kl}T_k(r_i)e^{\text{i}l\theta_j}, \ \ (i,j)\in S_h.
\end{align*}
Moreover, the operators in $\theta$-direction on Fourier basis can be found in \cite{Choi_Zhao2021}, in $r$-direction on Chebyshev basis are introduced in section (\ref{sec2}), and the spectral approximations to the differential operators $\partial_{\theta\theta}$, $\partial_{r}$, $\partial_{rr}$ follows:
\begin{align*}
   & \{f_{\theta\theta}\}_{ij} = \{\partial_{\theta\theta} f\}_{ij} = \sum_{(k,l)\in \hat{S}_h} -l^2\hat{f}_{kl}T_k(r_i)e^{\text{i}l\theta_j}, \ \ (i,j)\in S_h, \\
   & \{f_{r}\}_{ij} = \{\partial_{r} f\}_{ij} = \sum_{(k,l)\in \hat{S}_h} \hat{f}_{kl}T_k'(r_i)e^{\text{i}l\theta_j}, \ \ (i,j)\in S_h, \\
   & \{f_{rr}\}_{ij} = \{\partial_{rr} f\}_{ij} = \sum_{(k,l)\in \hat{S}_h} \hat{f}_{kl}T_k''(r_i)e^{\text{i}l\theta_j}, \ \ (i,j)\in S_h, 
\end{align*}
In matrix form, let $\hat{\mathbf{F}}_l = \{\hat{f}_{kl}\}_{k=0:N_{r}} $ be the vector of Chebyshev-Fourier coefficients with $l = -\frac{N_{\theta}}{2}+1:\frac{N_{\theta}}{2}$. By converting the Chebyshev basis to the ultraspherical basis, we have 
\begin{align*}
& \partial_{\theta\theta}: \hat{\mathbf{F}} \longrightarrow l^2\mathcal{S}_{0,2}\hat{\mathbf{F}}, \\
& \partial_{r}: \hat{\mathbf{F}} \longrightarrow \mathcal{S}_{1,2}\mathcal{D}_{0,1}\hat{\mathbf{F}}, \\
& \partial_{rr}: \hat{\mathbf{F}} \longrightarrow \mathcal{D}_{0,2}\hat{\mathbf{F}}.
\end{align*}
where $\mathcal{S}_{0,2}$, $\mathcal{S}_{1,2}$, $\mathcal{D}_{0,1}$, and $\mathcal{D}_{0,2}$ can be found in Section (\ref{sec2}).

\subsubsection{Estimation of $\|(-\Delta_h)^{-1}\|_{L^2,h}$}

In this subsection, we will provide an upper limit for the $L^2$ norm of the discrete inverse Laplacian operator $(-\Delta_h)^{-1}$, denoted by $\|(-\Delta_h)^{-1}\|_{L^2,h}$, which satisfies $\|(-\Delta_h)^{-1}f\|_{L^2,h}\leq C\|f\|_{L^2,h}$, where $f\in\mathcal{M}_h$ and the constant $C$ is a generic constant independent of the mesh size. Similar to the semi-discrete case (\ref{Subsec:Semi}), we require the the optimal constant $\|(-\Delta_h)^{-1}\|_{L^2,h}$ to guarantee the energy stability for the fully-discrete schemes (\ref{eqn:BDF_OK_fully}, \ref{eqn:BDF_NO_fully}).The following Lemma provides the stability of the discrete inverse Laplacian operator $(-\Delta_h)^{-1}$.

\begin{lemma}
For the function $f(r,\theta)\in\mathcal{M}_h$, the $L^2$-bound of the discrete inverse Laplacian operator is given by
\begin{align*}
\|(-\Delta_h)^{-1}\|_{L^2,h}  \leq C,
\end{align*}
with a generic constant $C$ independent of mesh size in both $r$- and $\theta$-directions.
\end{lemma}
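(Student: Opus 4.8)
The plan is to reduce the two-dimensional estimate to a family of one-dimensional radial estimates that are uniform both in the Fourier index and in the radial resolution, exploiting the decoupling of the scheme in $\theta$. Expanding $f=\sum_l f_l(r)e^{\mathrm{i}l\theta}$ and $u=(-\Delta_h)^{-1}f=\sum_l u_l(r)e^{\mathrm{i}l\theta}$, the discrete system (\ref{eqn:mat_poi_coef})--(\ref{eqn:lin_sym}) with $\alpha=0$ splits into the block solves $\mathcal{L}^{(l)}\hat{\bm u}_l=\mathcal{S}_{0,2}\mathcal{M}_T[r^2]\hat{\bm f}_l$, one for each $l=-N_\theta/2+1,\dots,N_\theta/2$, where $\mathcal{L}^{(l)}$ is the operator in (\ref{eqn:mat_poi_coef}) for that mode with the two boundary rows $\mathcal{B}$ of (\ref{eqn:B}) inserted. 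Since the $\theta$-grid is uniform and the exponentials are discretely orthogonal, the discrete norm factorizes, $\|u\|_{L^2,h}^2=2\pi\sum_l\|u_l\|_{r,h}^2$ with $\|\cdot\|_{r,h}$ the Chebyshev--Gauss--Lobatto-weighted radial norm, and likewise for $f$; by the discrete Parseval identity for Chebyshev polynomials this radial norm is comparable, with absolute constants, to the $\ell^2$-norm of the Chebyshev coefficient vector $\hat{\bm u}_l$. Hence it suffices to bound $\|(\mathcal{L}^{(l)})^{-1}\|_{\ell^2\to\ell^2}$ by a constant independent of $l$ and $N_r$.

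I would handle this in two regimes. For large $|l|$, say $|l|\ge l_0$ for a fixed threshold, the radial problem $-\big(r(ru_l')'\big)+l^2u_l=r^2f_l$ carries a strongly coercive zeroth-order term: testing against $u_l$ and using $r^{-1}\ge r$ on $(0,1)$ gives, at the continuous level, $l^2\|u_l\|_{L^2(r\,dr)}^2\le\langle f_l,u_l\rangle_{L^2(r\,dr)}$, hence $\|u_l\|\le l^{-2}\|f_l\|$; transferring this energy identity to the ultraspherical--collocation discretization (the quadrature behind $\langle\cdot,\cdot\rangle_h$ being exact on polynomials of degree $\le 2N_r-1$) yields $\|(\mathcal{L}^{(l)})^{-1}\|\le C l_0^{-2}$ uniformly in $N_r$. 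For the finitely many remaining modes $|l|<l_0$, I would invoke spectral convergence: the ultraspherical discretization of the regular Sturm--Liouville problem $-\big(r(rv')'\big)+l^2 v=r^2g$ with homogeneous Neumann data converges in operator norm to the continuous solution operator, which is bounded (for $l=0$ by the Poincar\'e inequality on the zero-mean subspace, i.e. the reciprocal of the first nonzero Neumann eigenvalue of the disk; for $0<|l|<l_0$ by the same coercivity estimate). Thus for each such $l$ there is $N_0(l)$ beyond which $\|(\mathcal{L}^{(l)})^{-1}\|\le 2\|(-\Delta^{(l)})^{-1}\|_{L^2}$, while the finitely many systems with $N_r\le N_0(l)$ are each invertible by Lemma \ref{thm:Solvable_time_dicrete} and contribute finite norms; taking the maximum over this finite set of pairs $(l,N_r)$ and combining with the large-$|l|$ bound gives the mesh-independent $C$.

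The main obstacle is making the discrete energy/coercivity step rigorous near the coordinate singularity $r=0$ and at the Chebyshev endpoints $r=\pm1$. The collocation--ultraspherical operator $\mathcal{L}^{(l)}$ is not symmetric in the naive sense, so there is no immediate summation-by-parts identity; one must rewrite the scheme in the equivalent weak (Galerkin-with-quadrature) form, check that the $r^2$-regularization used to remove the polar singularity only reweights an already nonnegative quadratic form and therefore does not degrade the lower bound, and control the discrepancy between the Chebyshev-weighted discrete inner product $\langle\cdot,\cdot\rangle_h$ and the geometrically natural weight $r\,dr$ via norm equivalences uniform in $N_r$. The Neumann condition enters through the two replaced rows $\mathcal{B}$ in (\ref{eqn:B}); I would verify that the discrete solution it selects still obeys a discrete Poincar\'e inequality with a constant depending only on $\Omega$. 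Once these points are settled, the Fourier factorization and the finite-set maximum are routine.
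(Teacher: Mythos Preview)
Your route is genuinely different from the paper's. The paper does not decompose into Fourier modes or split into large/small $|l|$; instead it takes the discrete inner product of $-(ru_r)_r-\tfrac{1}{r}u_{\theta\theta}=rf$ with $u$ to form a single bilinear form $a_h(u,u)=\langle rf,u\rangle_h$, then invokes Chebyshev-weight inequalities (in the style of Canuto et al., with $\omega=(1-r^2)^{-1/2}$) together with the exactness of Gauss--Lobatto quadrature on $\mathbb{P}_{2N_r-1}$ to show directly that $\langle u_\theta,u_\theta\rangle_h\le 4\,a_h(u,u)$. The conclusion then follows from the discrete Poincar\'e inequality in the $\theta$-direction (valid on the zero-mean space $\mathcal{M}_h$) and Cauchy--Schwarz: $\|u\|_{L^2,h}^2\le C'\|u_\theta\|_{L^2,h}^2\le 4C'a_h(u,u)\le 4C'\|f\|_{L^2,h}\|u\|_{L^2,h}$. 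So the paper's argument is one global coercivity estimate---it buys brevity and avoids any case analysis or compactness. Your mode-by-mode strategy buys explicit $l^{-2}$ decay for high frequencies, which could be useful elsewhere, but the small-$|l|$ part---relying on operator convergence of $(\mathcal{L}^{(l)})^{-1}$ plus a finite-set maximum over $(l,N_r)$---is softer and, as you yourself flag, leaves real work: the ultraspherical collocation operator is not self-adjoint in $\langle\cdot,\cdot\rangle_h$, so transferring the continuous energy identity (and especially the $l=0$ Poincar\'e bound) to the scheme is precisely the delicate step that the paper's weighted-integral machinery handles in one stroke.
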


\begin{proof}
The proof of this lemma is based on \cite[Chapter~6,7]{Canuto_Springer2006} and the parity property of the Cartesian and polar coordinates in the disk domain\cite{Boyd2001, Shen2000, EISEN_HEINRICHS_WITSCH1991, Boyd_Yu2011}. We present a brief outline of the proof as below. For the functions $f, u \in \mathcal{M}_h$, the discrete Laplacian operator $-\Delta_{h}$ satisfies:
\begin{align*} 
(-\Delta_h)u = f \ \ \ \Longrightarrow \ \ -(ru_{r})_r-\frac{1}{r}u_{\theta\theta} = rf.
\end{align*}
Here the derivatives for $u\in\mathcal{M}_h$ are defined in section \ref{subsection:spatial discretization}.
Taking the discrete inner product with $u$, we have
\begin{align*}
\left\langle rf,u\right\rangle_{h} = a_{h}(u,u) & = \langle-(ru_r)_r-\frac{1}{r}  u_{\theta\theta},u\rangle_{h}  \\
& = h_{\theta}\sum_{(i,j)\in S_{h}} \left[\left(-(ru_r)_r\right)_{ij} + \left(-\frac{1}{r}u_{\theta\theta}\right)_{ij}\right]u_{ij}\omega_{i}. 
\end{align*}
Next, we define 
\begin{align*}
b(u,u) & =  \int_{-1}^{1} -(ru_r)_ru\omega \ \text{d}r + \int_{-1}^{1} -\frac{1}{r}u_{\theta\theta}u\omega \ \text{d}r,
\end{align*}
with $\omega = (1-r^2)^{-\frac{1}{2}}$, which leads to $\omega_r = r\omega^3 = r(1-r^2)\omega^5$, $\omega_{rr} = (1+2r^2)\omega^5 $, and $\frac{\omega_{rr}}{2} - \omega_{r}^2\omega^{-1} = \frac{1}{2}\omega^5$. Note that, with a slight abuse of notation, $\omega$ is exclusively used to represent the weight function in this proof. In other sections of this paper, $\omega$ always denotes the volume fraction of $u$ for the OK model.

Using the techniques in \cite[Chapter~7]{Canuto_Springer2006}, we have the following two inequalities:
\begin{align}
& b(u,u) \geq  \int_{-1}^{1}\frac{1}{r}u_{\theta}^2\omega\ \text{d}r - \frac{3}{2}  \int_{-1}^{1} ru^2\omega^5 \ \text{d}r, \label{eqn:ineq_I} \\
& b(u,u) \geq \frac{1}{2}\int_{-1}^{1}ru^2\omega^5\ \text{d}r. \label{eqn:ineq_II}
\end{align}
Thus, the inequality (\ref{eqn:ineq_I}) and (\ref{eqn:ineq_II}) imply
\begin{align}
b(u,u) \geq  \int_{-1}^{1}\frac{1}{r}u_{\theta}^2\omega\ \text{d}r - 3 b(u,u) \ \ \ \Longrightarrow\ \ \int_{-1}^{1}u_{\theta}^2\omega\ \text{d}r \leq 4b(u,u). \nonumber
\end{align}
Thanks to the closed relation in \cite[Chapter~2, (2.2.17)]{Canuto_Springer2006}
\begin{align*}
\sum_{j=0}^Np(x_j)\omega_{j} = \int_{-1}^{1}p(x)\omega(x)\ \text{d}x, \ \ \forall p\in\mathbb{P}_{2N-1},
\end{align*}
where $\mathbb{P}_{2N-1}$ is the set of polynomials of degree $2N-1$, we obtain
\begin{align*}
\left\langle u_{\theta}, u_{\theta}\right\rangle_h = h_{\theta}\sum_{(i,j)\in S_{h}} \left(u_{\theta}\right)^2_{ij}\omega_{i} \leq 4 a_{h}(u,u).
\end{align*}
For $f,u \in \mathcal{M}_{h}$, using the discrete Poincare inequality for the Fourier series, we have 
\begin{align*}
\|u\|_{L^2,h} \leq C'\|u_{\theta}\|_{L^2,h},
\end{align*}
where $C'$ is a generic constant independent of the mesh size $h$. Therefore, we have the following inequalities
\begin{align*}
\|u\|_{L^2,h}^2 \leq C'\|u_{\theta}\|_{L^2,h}^2 \leq 4C'a_{h}(u,u)\leq 4C'\|f\|_{L^2,h}\|u\|_{L^2,h},
\end{align*}
which lead to the result
\begin{align*}
\|u\|_{L^2,h} = \|(-\Delta_h)^{-1}f\|_{L^2,h} \leq C\|f\|_{L^2,h},
\end{align*}
where $C$ is a generic constant independent of the mesh size in both $r$- and $\theta$-directions.
\end{proof}

\subsubsection{Fully-discrete schemes and their energy stability}\label{subsec:FullyScheme}
In this subsection, we present the fully-discrete schemes for the pACOK and pACNO equations in the disk domain. The numerical solutions are given by:
\begin{align*}
    U^n \approx u(r,\theta;t_n)|_{\Tilde{\Omega}_{h}}, \ \ U_1^n \approx u_1(r,\theta;t_n)|_{\Tilde{\Omega}_{h}}, \ \ U_2^n \approx u_2(r,\theta;t_n)|_{\Tilde{\Omega}_{h}}.
\end{align*}
Then, the second order fully-discrete schemes for the pACOK equation follows: given initial condition $U^{-1} = U^{0} = u_0(r,\theta)|_{\Tilde{\Omega}_{h,\text{disk}}}$, for $n\in[\mathbb{N^+}]$, find $U^{n+1} = (U^{n+1})_{ij} \in \mathcal{M}_h$ such that   
\begin{align}\label{eqn:BDF_OK_fully}
    \frac{3U^{n+1}-4U^{n}+U^{n-1}}{2\tau}
    = \ & \epsilon\Delta_h U^{n+1} - \frac{1}{\epsilon}\left[2W'(U^n)-W'(U^{n-1})\right] \nonumber\\
    &  -\frac{\kappa_h}{\epsilon}\left(U^{n+1}-2U^{n}+U^{n-1}\right) -\gamma\beta_h(-\Delta)_h^{-1}\left(U^{n+1}-2U^{n}+U^{n-1}\right) \nonumber\\
    & -\gamma(-\Delta)_h^{-1}\left(2U^n-U^{n-1}-\omega\right)  -M\left[\langle2U^n-U^{n-1},1\rangle_h - \omega|\Tilde{\Omega}_h|\right]
\end{align}
where $\kappa_h,\beta_h\geq0$ are stabilization constants, the stabilizer $\frac{\kappa_h}{\epsilon}\left(U^{n+1}-2U^{n}+U^{n-1}\right)$ controls the growth of $W'$, and the stabilizer $\gamma \beta_h(-\Delta)_h^{-1}\left(U^{n+1}-2U^{n}+U^{n-1}\right)$ dominates the behavior of $(-\Delta)_h^{-1}$. 

Similarly, the second order fully-discrete scheme for the pACON equations reads: given initial condition $U_i^{-1} = U_i^{0} = u_{0,i}(r,\theta)|_{\Tilde{\Omega}_{h,\text{disk}}}$, $i = 1,2$, with stabilizer $\left(U^{n+1}-2U^{n}+U^{n-1}\right)$, the numerical solutions $(U_1^{n+1},U_{2}^{n+1}) = \left((U_1^{n+1})_{ij},(U_2^{n+1})_{ij}\right) \in \mathcal{M}_h\times\mathcal{M}_h$ for each $n\in[\mathbb{N^+}]$ are given by 
\begin{align}\label{eqn:BDF_NO_fully}
        &\frac{3U^{n+1}_i-4U^{n}_i+U^{n-1}_i}{2\tau} \nonumber\\ 
    = & \epsilon\Delta_h U^{n+1}_i + \frac{\epsilon}{2}\left(2\Delta_h U^{n+\text{mod}(i+1,2)}_{j}-\Delta_h U^{n-1+2\cdot\text{mod}(i+1,2)}_{j}\right) \nonumber \\
    & - \frac{1}{\epsilon}\left[2\frac{\partial W_2}{\partial u_{i}}(U^{n+\text{mod}(i+1,2)}_1,U^{n}_{2})-\frac{\partial W_2}{\partial u_{i}}(U^{n-1+2\cdot\text{mod}(i+1,2)}_1,U^{n-1}_{2})\right] \nonumber \\
    &  -\frac{\kappa_{i,h}}{\epsilon}\left(U^{n+1}_i-2U^{n}_i+U^{n-1}_i\right)  - \gamma_{ii}\beta_{i,h}(-\Delta)_{h}^{-1}\left(U^{n+1}_i-2U^{n}_i+U^{n-1}_i\right)  \nonumber\\
    & -\gamma_{ii}(-\Delta)_{h}^{-1}\left[2U^n_i-U^{n-1}_i-\omega_i\right]  -\gamma_{ij}(-\Delta)_{h}^{-1}\left[2U^{n+\text{mod}(i+1,2)}_j-U^{n-1+2\cdot\text{mod}(i+1,2)}_j-\omega_j\right]\nonumber\\
    & -M_i\left[\langle2U_i^n-U_i^{n-1},1\rangle_h - \omega_i|\Tilde{\Omega}_{h}|\right],
\end{align}
for $i = 1,2$ and $j\neq i$, and stabilization constants $\kappa_{1,h},\kappa_{2,h}, \beta_{1,h},\beta_{2,h}\geq0$.

To consider the energy stability for the fully-discrete schemes for the pACOK (\ref{eqn:BDF_OK_fully}) and pACNO (\ref{eqn:BDF_NO_fully}) equations, we define the discrete energy functional for the OK model 
\begin{align}\label{eqn:pOK_discrete}
E^{\text{pOK}}_{h}(U^{n}) =  \frac{\epsilon}{2}\|\nabla_{h}U^{n}\|_{L^2,h}+\frac{1}{\epsilon}\left\langle W(U^{n}),1 \right\rangle_h + \frac{\gamma}{2}\|(-\Delta)_{h}^{-\frac{1}{2}}\left(U^{n}-\omega\right)\|_{L^2,h} + \frac{M}{2} \left(\left\langle U^{n}, 1 \right\rangle_h - \omega|\Omega|\right),
\end{align}
and the discrete energy functional for the NO model
 \begin{align}\label{eqn:pNO_discrete}
E^{\text{pOK}}_{h}(U_1^{n},U_2^{n}) =  & \frac{\epsilon}{2}\left(\|\nabla_{h}U_1^{n}\|_{L^2,h}+\|\nabla_{h}U_2^{n}\|_{L^2,h}+\left\langle \nabla_{h}U_1^{n},\nabla_{h}U_2^{n}\right\rangle\right)+\frac{1}{\epsilon}\left\langle W_2(U_1^{n},U_2^{n}),1 \right\rangle_h \nonumber\\
& + \sum\limits_{i,j=1}^{2}\frac{\gamma_{ij}}{2}\left\langle(-\Delta)_{h}^{-1}\left(U_i^{n}-\omega\right),\left(U_j^{n}-\omega\right)\right\rangle_{h} + \sum\limits_{i=1}^{2}\frac{M_i}{2} \left(\left\langle U_i^{n}, 1 \right\rangle_h - \omega_i|\Omega|\right),
\end{align}
Then we establish the following energy stability for the fully discrete schemes (\ref{eqn:BDF_OK_fully}, \ref{eqn:BDF_NO_fully}). The proofs of the theorems are similar to that of Theorem \ref{thm:OK/NO_energy}, the only difference is to substitution of $(-\Delta)^{-1}$ with $(-\Delta)^{-1}_{h}$. We omit the details for brevity.

\begin{theorem}\label{thm:OK/NO_energy_fully} 
For the fully discrete scheme (\ref{eqn:BDF_OK_fully}) of pACOK equation, we define a modifying energy functional as
 \begin{align*}
\tilde{E}_h^{\text{pOK}}(U^{n},U^{n-1} ) = E_h^{\text{pOK}}(U^{n})+\left(\frac{\kappa_h}{2\epsilon}+\frac{1}{4\tau}+C_h\right)\|U^{n}-U^{n-1}\|_{L^2,h}^2+\frac{\gamma \beta_h}{2}\|(-\Delta)^{-\frac{1}{2}}\left(U^{n}-U^{n-1}\right)\|_{L^2,h}^2.
\end{align*}
where $\{U^{n}\}_{n=1}^N$ is generated by scheme (\ref{eqn:BDF_OK_fully}). If $\frac{\kappa_h}{2\epsilon}+\frac{1}{4\tau}\ge0$, $\beta_h\ge0$, and $\frac{1}{\tau}\ge 3 C_h$, then
\begin{align*}
    \tilde{E}^{\text{pOK}}_h (U^{n+1},U^{n} ) \le \tilde{E}^{\text{pOK}}_h (U^{n},U^{n-1} ),
\end{align*}
where $C_h$ is a generic constant independent of the time step size $\tau$.

For the pACNO system (\ref{eqn:BDF_NO_fully}), the modifying energy functional is defined by
\begin{align*}
    &\tilde{E}_h^{\text{pNO}}[U_1^{n},U_1^{n-1},U_2^{n},U_2^{n-1}] \\
    =\ & E_h^{\text{pNO}}[U_1^{n},U_2^{n}]+\sum_{i=1}^{2}\left[\left(\frac{\kappa_{i,h}}{2\epsilon}+\frac{1}{4\tau}+C_{i,h}\right)\|U_i^{n}-U_i^{n-1}\|_{L^2,h}^2+\frac{\gamma_{ii} \beta_{i,h}}{2}\|(-\Delta_h)^{-\frac{1}{2}}(U_i^{n}-U_i^{n-1})\|^2_{L^2,h}\right],
\end{align*}
where $\{U_1^{n},U_2^{n}\}_{n=1}^N$ is generated by scheme (\ref{eqn:BDF_NO_fully}). If $\kappa_{i,h},\beta_{i,h}\geq0$, $i=1,2$, and $\tau\leq \min\{\frac{1}{3C_{1,h}},\frac{1}{3C_{2,h}}\}$, then
\begin{align*}
     \tilde{E}^{\text{pNO}}[U_1^{n+1},U_1^{n},U_2^{n+1},U_2^{n}]\leq  \tilde{E}^{\text{pNO}}[U_1^{n},U_1^{n-1},U_2^{n},U_2^{n-1}],
\end{align*}
where $C_{i,h}$, $i = 1,2$ are generic constants independent of the time step size $\tau$.
\end{theorem}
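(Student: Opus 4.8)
The plan is to run, essentially line for line, the argument proving Theorem~\ref{thm:OK/NO_energy}, with the continuous $L^2$ pairing $\langle\cdot,\cdot\rangle$ replaced by the discrete pairing $\langle\cdot,\cdot\rangle_h$, the operators $-\Delta$, $(-\Delta)^{-1}$ replaced by their spectral-collocation analogues $-\Delta_h$, $(-\Delta_h)^{-1}$, and the operator norm $\|(-\Delta)^{-1}\|_{L^2}$ replaced by $\|(-\Delta_h)^{-1}\|_{L^2,h}$, which the Lemma bounding the discrete inverse Laplacian shows is a constant independent of $N_r$ and $N_\theta$. Since the pACNO scheme differs from the pACOK scheme only by treating $i=1,2$ separately and summing, it suffices to describe the pACOK case.

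First I would test the scheme (\ref{eqn:BDF_OK_fully}) against $U^{n+1}-U^n$ in the discrete inner product. The BDF term $\tfrac{3U^{n+1}-4U^n+U^{n-1}}{2\tau}$ is split with the identities $a\cdot(a-b)=\tfrac12 a^2-\tfrac12 b^2+\tfrac12(a-b)^2$ and $b\cdot(a-b)=\tfrac12 a^2-\tfrac12 b^2-\tfrac12(a-b)^2$ at $a=U^{n+1}-U^n$, $b=U^n-U^{n-1}$, reproducing $\tfrac1\tau\|U^{n+1}-U^n\|_{L^2,h}^2$ together with the telescoping $\tfrac1{4\tau}$ norm differences. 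The diffusion term $\epsilon\langle\Delta_h U^{n+1},U^{n+1}-U^n\rangle_h$ is handled by the discrete Green identity $\langle-\Delta_h v,w\rangle_h=\langle\nabla_h v,\nabla_h w\rangle_h$ --- legitimate because the Gauss--Lobatto quadrature underlying $\langle\cdot,\cdot\rangle_h$ is exact on the relevant polynomial degree, via the closure relation cited in the proof of that Lemma --- and then the same $a\cdot(a-b)$ splitting, which gives $\tfrac\epsilon2(\|\nabla_h U^{n+1}\|_{L^2,h}^2-\|\nabla_h U^n\|_{L^2,h}^2)$ plus a nonnegative remainder that is dropped. The stabilizers $\tfrac{\kappa_h}{\epsilon}$ and $\gamma\beta_h(-\Delta_h)^{-1}$ produce, after the same identities, exactly the $\bigl(\tfrac{\kappa_h}{2\epsilon}\bigr)\|U^{n}-U^{n-1}\|_{L^2,h}^2$ and $\tfrac{\gamma\beta_h}{2}\|(-\Delta_h)^{-1/2}(U^n-U^{n-1})\|_{L^2,h}^2$ telescoping pieces of $\tilde{E}_h^{\text{pOK}}$, plus nonnegative $\|U^{n+1}-2U^n+U^{n-1}\|$-type terms that are discarded, using that $(-\Delta_h)^{-1}$ is symmetric positive definite.

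Next I would bound the explicit terms. The potential term $-\tfrac1\epsilon\langle 2W'(U^n)-W'(U^{n-1}),U^{n+1}-U^n\rangle_h$ is controlled by a pointwise second-order Taylor expansion of $W$ about $U^n$ with remainder dominated by $L_{W''}=\|W''\|_{L^\infty}$, followed by Young's inequality; the bound is pointwise, hence survives summation against the positive quadrature weights, yielding $-\tfrac1\epsilon(\langle W(U^{n+1}),1\rangle_h-\langle W(U^n),1\rangle_h)+\tfrac{L_{W''}}{2\epsilon}(\|U^n-U^{n-1}\|_{L^2,h}^2+2\|U^{n+1}-U^n\|_{L^2,h}^2)$ (for pACNO this is the two-variable analogue for $W_2$, with the same constant). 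The nonlocal term $-\gamma\langle(-\Delta_h)^{-1}(2U^n-U^{n-1}-\omega),U^{n+1}-U^n\rangle_h$ is rearranged into a telescoping $\tfrac\gamma2\|(-\Delta_h)^{-1/2}(U^n-\omega)\|_{L^2,h}^2$ difference plus cross terms bounded by Cauchy--Schwarz and $\|(-\Delta_h)^{-1}\|_{L^2,h}\le C$; the penalty term $-M(\langle 2U^n-U^{n-1},1\rangle_h-\omega|\Tilde{\Omega}_h|)\langle 1,U^{n+1}-U^n\rangle_h$ similarly yields a telescoping $\tfrac M2(\langle U^n-\omega,1\rangle_h)^2$ difference plus cross terms bounded by Cauchy--Schwarz and the mesh-independent constant $\|1\|_{L^2,h}^2$. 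Collecting the ``bad'' contributions defines $C_h$ as the analogue of (\ref{cst:pACOK_cont}) with $\|(-\Delta_h)^{-1}\|_{L^2,h}$ in place of $\|(-\Delta)^{-1}\|_{L^2}$ (and $C_{i,h}$ likewise from (\ref{cst:pNO})), which is $\tau$- and mesh-independent by the Lemma. Rearranging and adding $C_h(\|U^{n+1}-U^n\|_{L^2,h}^2-\|U^n-U^{n-1}\|_{L^2,h}^2)$ to both sides gives
\[
\tilde{E}_h^{\text{pOK}}(U^{n+1},U^n)-\tilde{E}_h^{\text{pOK}}(U^n,U^{n-1})\le\Bigl(3C_h-\tfrac1\tau\Bigr)\|U^{n+1}-U^n\|_{L^2,h}^2,
\]
so $\tfrac1\tau\ge 3C_h$ forces monotonicity; the pACNO statement follows by carrying this out for $i=1,2$ and absorbing the symmetric $\gamma_{12},\gamma_{21}$ cross terms and the coupling diffusion term $\tfrac\epsilon2\langle\nabla_h U_1,\nabla_h U_2\rangle_h$ into $E_h^{\text{pNO}}$, exactly as in Theorem~\ref{thm:OK/NO_energy}.

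The crux is not the bookkeeping --- identical to Theorem~\ref{thm:OK/NO_energy} once $h$-subscripts are attached --- but the two structural facts invoked freely: first, that $-\Delta_h$ obeys a discrete Green identity and is symmetric positive definite (so $\langle-\Delta_h v,v\rangle_h=\|\nabla_h v\|_{L^2,h}^2\ge0$ and $(-\Delta_h)^{-1}$ is SPD), which rests on exactness of Gauss--Lobatto quadrature on the appropriate polynomial space together with the even--odd parity structure that neutralizes the $r=0$ singularity; and second, the mesh-uniform bound $\|(-\Delta_h)^{-1}\|_{L^2,h}\le C$, which is precisely the content of the Lemma established just above. With these in hand, every remaining estimate is the pointwise/algebraic argument of the semi-discrete proof.
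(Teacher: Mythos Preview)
Your proposal is correct and matches the paper's approach exactly: the paper's entire proof reads ``The proofs of the theorems are similar to that of Theorem~\ref{thm:OK/NO_energy}, the only difference is to substitution of $(-\Delta)^{-1}$ with $(-\Delta)^{-1}_{h}$. We omit the details for brevity.'' Your write-up is in fact more careful than the paper's, since you explicitly flag the two structural ingredients --- the discrete Green identity/symmetry of $-\Delta_h$ and the mesh-uniform bound $\|(-\Delta_h)^{-1}\|_{L^2,h}\le C$ from the preceding Lemma --- that the paper leaves implicit.
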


To implement the fully-discrete scheme (\ref{eqn:BDF_OK_fully}), we take $\beta_h = 0$ and repeat the following algorithm: for $n = 1, 2, \cdots$,
\begin{enumerate}
\item Evaluate $(-\Delta_h)^{-1}(2U^{n}-U^{n-1}-\omega)$ using the Helmholtz solver with $\alpha = 0$ in section \ref{subsec:poisson};
\item Insert $(-\Delta_h)^{-1}(2U^{n}-U^{n-1}-\omega)$ into the right hand side of (\ref{eqn:BDF_OK_fully}) and solve another Helmholtz equation
    \begin{align}\label{eqn:OK_fully_Scheme}
    \left(-\epsilon\Delta_h+\frac{3}{2\tau}+\frac{\kappa_h}{\epsilon}\right)U^{n+1} = F_h^{n,n-1},
    \end{align}
where the term $F_h^{n,n-1}$ can be explicitly represented by
    \begin{align}\label{eqn:OK_fully_Scheme_RHS}
        F_h^{n,n-1} = & \frac{2}{\tau}U^n-\frac{1}{2\tau}U^{n-1} - \frac{1}{\epsilon}\left[2W'(U^n)-W'(U^{n-1})\right]+\frac{\kappa_h}{\epsilon}\left(2U^{n}-U^{n-1}\right) \nonumber\\
        & -\gamma(-\Delta)_h^{-1}\left(2U^n-U^{n-1}-\omega\right)  -M\left[\langle2U^n-U^{n-1},1\rangle_h - \omega|\Tilde{\Omega}_h|\right].
    \end{align}
\end{enumerate}

Similar algorithm applies to solving the fully-discrete scheme (\ref{eqn:BDF_NO_fully}) for the pACNO system with $\beta_{i,h} = 0$.

\section{Numerical Experiments}\label{sec:numerical}
In this section, we present some numerical experiments for the pACOK and pACNO equations in the disk domain $\Omega = [0,1]\times [0,2\pi) \subset \mathbb{R}^2$. By imposing homogeneous Neumann boundary conditions in 
radial ($r$) direction and periodic boundary conditions in angular ($\theta$) direction, we adopt the ultraspectral spectral method in the spatial domain and BDF schemes (\ref{eqn:BDF_OK_fully}) and (\ref{eqn:BDF_NO_fully}) with proper stabilizers to explore the coarsening dynamics and the equilibrium states of OK and NO models. For the disk domain, we take the uniform mesh grid with $N_{\theta} = 2^9$ in angular direction and Chebyshev mesh grid with $N_{r} = 2^9+1$ in radial direction. Denote the mesh size $h = \frac{2\pi}{N_{\theta}}$. The stopping criterias for the time iteration are set to 
\begin{align*}
    \frac{\|U^{n+1}-U^{n}\|_{h,L^{\infty}}}{\tau} \leq 10^{-5}
\end{align*}
for pACOK equation with approximate solution $U^{n}$ at $n$-th step, and 
\begin{align*}
     \frac{\|U_1^{n+1}-U_1^{n}\|_{h,L^{\infty}}}{\tau}+\frac{\|U_2^{n+1}-U_2^{n}\|_{h,L^{\infty}}}{\tau} \leq 10^{-5}
\end{align*}
for pACNO equation with approximate solution $U_1^{n},U_2^{n}$ at $n$-th step.

\subsection{Rate of convergence}
In this subsection, we test the rate of convergence of the second-order semi-implicit scheme for both pACOK and pACNO equations (\ref{eqn:pACOK},\ref{eqn:pACNO}) in the disk domain. 

For the pACOK equation, we choose the following initial condition:
\begin{align*}
    U^0(x,y) = \left\{
    \begin{array}{ll}
       1  &  \text{if } x^2 + (y-0.2)^2 < r_0^2,\\
       0  &  \text{otherwise},
    \end{array}
    \right.
\end{align*}
where $r_0 = \sqrt{\omega}+0.1$. While using the parameter $\omega = 0.15$, $\gamma = 100$, $\kappa = 1000$, $\beta = 5$ and $M = 1000$, we take the fully discrete scheme (\ref{eqn:pACOK_scheme}) for the numerical experiments. The test is conducted for $\epsilon$ values of $25h$ and $20h$, up to time $T=0.01$. The benchmark solution used for comparison has a time step size of $\tau = 1\times 10^{-6}$.

While utilizing the second-order scheme (\ref{eqn:pACOK_scheme}) to solve the pACOK equation for the binary system, Table \ref{table:pACOK_rateofconv} presents the error and corresponding rate of convergence at a fixed time $T=0.01$. It can be observed that the numerical rates for each $\epsilon = 25h, \ 20h$ are approximately equal to $2$ for small time step size $\tau$, which is  in accordance with the theoretical prediction.

\begin{table}[h!]
\begin{center}
\begin{tabular}{ |c||c|c|c|c|  }
\hline
 & \multicolumn{2}{|c|}{$\epsilon = 25h$} & \multicolumn{2}{|c|}{$\epsilon = 20h$} \\
\hline
$\tau$ & Error & Rate & Error & Rate \\
\hline
5.000e-4 & 1.37132e-1 & -       & 1.69732e-1 &  - \\
2.500e-4 & 4.09459e-2 & 1.74377 & 5.86987e-2 & 1.53186\\
1.250e-4 & 1.13057e-2 & 1.85667 & 1.62430e-2 & 1.85351\\
6.250e-5 & 3.27654e-3 & 1.78680 & 4.64194e-3 & 1.80702\\
3.125e-5 & 9.58757e-4 & 1.77294 & 1.33686e-3 & 1.79587\\
1e-6 (benchmark) & - &- &-&- \\
\hline
\end{tabular}
\end{center}
\caption{Rate of convergence for the pACOK equation in the binary system with parameters $\omega = 0.15$, $\gamma = 100$, $\kappa = 1000$, $\beta = 5$ and $M = 1000$.}
\label{table:pACOK_rateofconv}
\end{table}

For the pACNO equations, we use two separate disks as the initial data:
\begin{align*}
        U_1^0(x,y) = \left\{
    \begin{array}{ll}
       1  &  \text{if } (x-0.4)^2 + (y+0.3)^2 < r_1^2,\\
       0  &  \text{otherwise},
    \end{array}
    \right. \\
        U_2^0(x,y) = \left\{
    \begin{array}{ll}
       1  &  \text{if } (x+0.4)^2 + (y-0.3)^2 < r_2^2,\\
       0  &  \text{otherwise},
    \end{array}
    \right.
\end{align*}
where $r_1 = \sqrt{\omega_1}+0.05$ and $r_2 = \sqrt{\omega_2}+0.05$. In the numerical simulation, we take the parameters $\omega_{1} = \omega_{2} = 0.09$, $\gamma_{11} = \gamma_{22} = 500$, $\gamma_{12} = \gamma_{21} = 0$, $\kappa_1 = \kappa_2 = 1000$, $\beta_1 = \beta_2 = 0$ and $M_1 = M_2 = 1000$ and use the fully discrete scheme (\ref{eqn:pACNO_scheme}). The test is also performed up to $T = 0.01$ for $\epsilon = 25h$ and $20h$, and the benchmark solution is calculated with the time step size of $\tau = 1\times 10^{-6}$.

Table \ref{table:pACNO_rateofconv} shows the errors and the convergence rates for the pACNO equations in the ternary system solved using the second-order scheme (\ref{eqn:pACNO_scheme}) at a fixed time $T=0.01$. Similar to the pACOK equation, the numerical rates closely match the theoretical value of $2$ for each $\epsilon$ value of $25h$ and $20h$, as well as for a small time step size $\tau$.

\begin{table}[h!]
\begin{center}
\begin{tabular}{ |c||c|c|c|c|  }
\hline
 & \multicolumn{2}{|c|}{$\epsilon = 25h$} & \multicolumn{2}{|c|}{$\epsilon = 20h$} \\
\hline
$\tau$ & Error & Rate & Error & Rate \\
\hline
5.000e-4 & 1.31507e-1 & -       & 1.59813e-1 &  - \\
2.500e-4 & 3.88543e-2 & 1.75900 & 5.23678e-2 & 1.60963\\
1.250e-4 & 1.06830e-2 & 1.86276 & 1.45313e-2 & 1.84952\\
6.250e-5 & 3.18168e-3 & 1.74745 & 4.25723e-3 & 1.77117\\
3.125e-5 & 1.04297e-3 & 1.60908 & 1.35478e-3 & 1.65185\\
1e-6 (benchmark) & - &- &-&- \\
\hline
\end{tabular}
\end{center}
\caption{Rate of convergence for the pACNO equation in the ternary system with parameters $\omega_{1} = \omega_{2} = 0.09$, $\gamma_{11} = \gamma_{22} = 500$, $\gamma_{12} = \gamma_{21} = 0$, $\kappa_1 = \kappa_2 = 1000$, $\beta_1 = \beta_2 = 0$ and $M_1 = M_2 = 1000$.}
\label{table:pACNO_rateofconv}
\end{table}

\subsection{Coarsening Dynamics of OK model}
In this subsection, we focus on the dynamics of the pACOK equation with the scheme (\ref{eqn:BDF_OK_fully}). When the relative volume $\omega$ of one phase (species $A$) is small compared to the total volume, and the strength $\gamma$ of long-range interactions is relatively large, i.e. $\omega\ll 1$ and $\gamma\gg1$, the pACOK equation results in an equilibrium of bubble assembly in which one phase (species $A$) is embedded into the other one (species $B$). 

The initial data is randomly given by the uniform mesh of the disk domain $\Omega = [0,1]\times [0,2\pi)$ with the mesh size $32h$ in $r$-direction and $8h$ in $\theta$-direction. In MATLAB, the random initials are easily generated by the command 
\begin{align*}
    \left[\begin{array}{cc}
        \texttt{repelem}(\texttt{rand}((N_r-1)/\texttt{ratio},N_{th}/(\texttt{ratio}/4)),\texttt{ratio},\texttt{ratio}/4)^T \\
        \texttt{repelem}(\texttt{rand}(N_{th}/(\texttt{ratio},1)),\texttt{ratio},1)^T
    \end{array}\right]^{T}
\end{align*}
with {\texttt{ratio = 32}} in the numerical experiments for pACOK equation. At last, the parameter values are chosen as $\tau = 5\times10^{-4}$, $\omega = 0.15$, $\kappa = 2000$, $\beta = 0$ and $M = 2000$.

The coarsening dynamics of the pACOK equation and the equilibria in the binary system are shown in Figure \ref{fig:binary_system} with $\gamma = 2500,\ 3500$, and $4000$ respectively. For each subfigure, we insert the snapshots taken at four different times $t$, with colored titles corresponding to the colored marker on the monotone-decreasing energy curve. With the value of $\gamma$ increasing and other parameters fixed, we observe that the stronger long-range repulsive force leads to an increase in the number of bubbles. The results show in Figure \ref{fig:binary_system} (top) with $\gamma = 2500$, Figure \ref{fig:binary_system} (middle) with $\gamma = 3500$ and Figure \ref{fig:binary_system} (bottom) with $\gamma = 4000$. In Figure \ref{fig:binary_system} (top), starting from the random initial, the phase separation arises in a very short time period and a group of bubbles with different sizes appear as shown in $t=0.5$, then the small bubbles disappear and other bubbles in the interior of the disk grow into the same size which is shown in $t=3,10,50$, similar to those in the square domain \cite{Choi_Zhao2021}. Interestingly, different from the square domain, we can observe half bubbles appearing as an interaction with the boundary of the disk, which can also be found from Ren and Shoup's recent work \cite{Ren_Shoup2020}.

\begin{figure}[h]
    \begin{center}
    \includegraphics[width=1\textwidth]{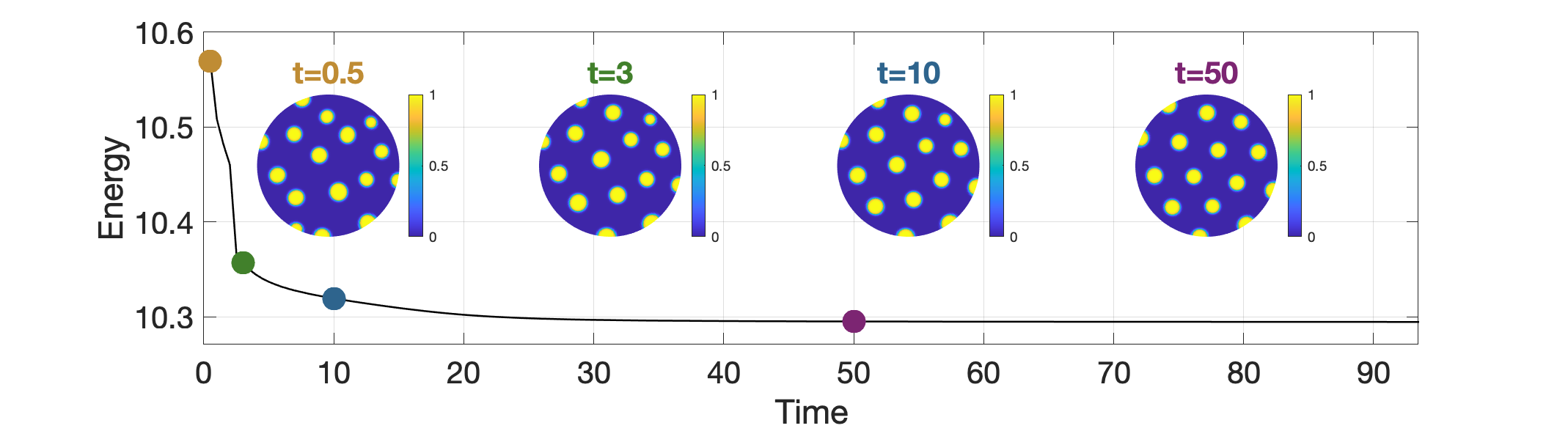}
    \includegraphics[width=1\textwidth]{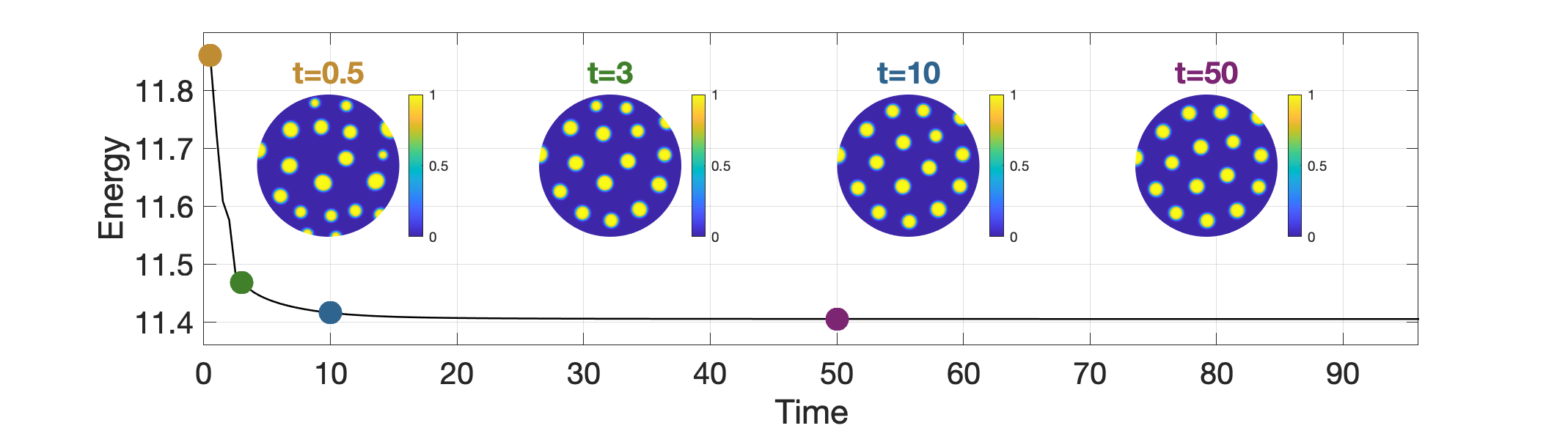}
    \includegraphics[width=1\textwidth]{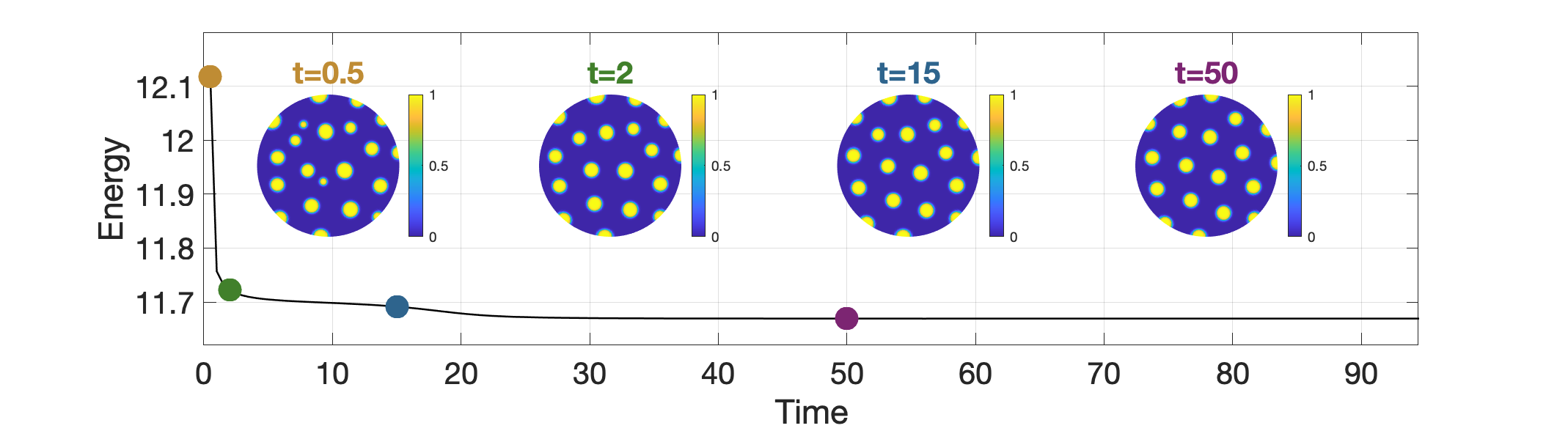}
    \end{center}
    \caption{Coarsening dynamics in binary system on a unit disk with $\gamma = 2500$(top), $\gamma = 3500$(middle), and $\gamma = 4000$(bottom).}
    \label{fig:binary_system}
\end{figure}

\begin{figure}[h]
    \begin{center}
    \includegraphics[width=1\textwidth]{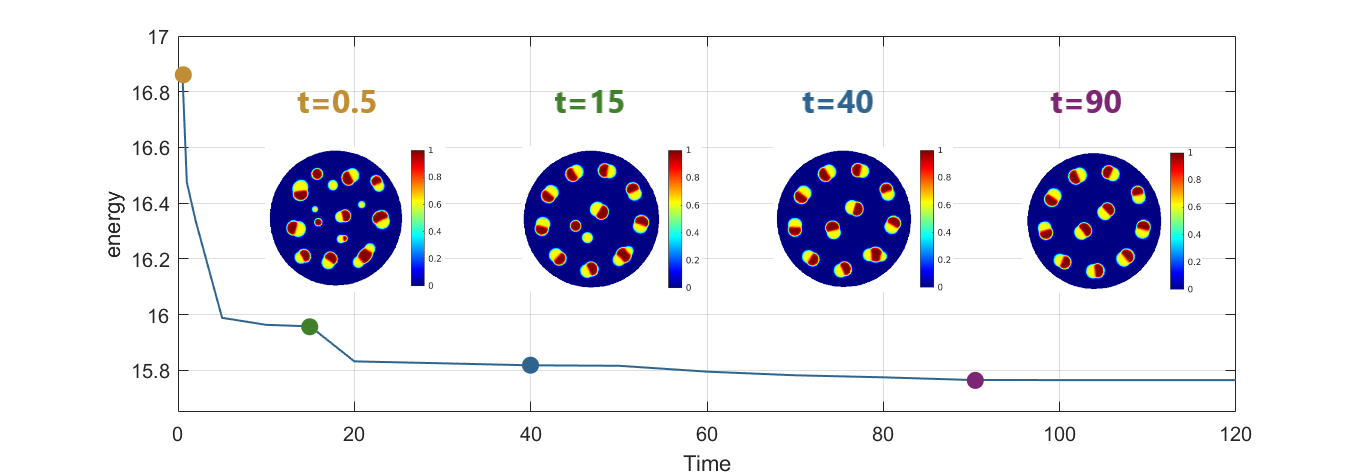}
    \includegraphics[width=1\textwidth]{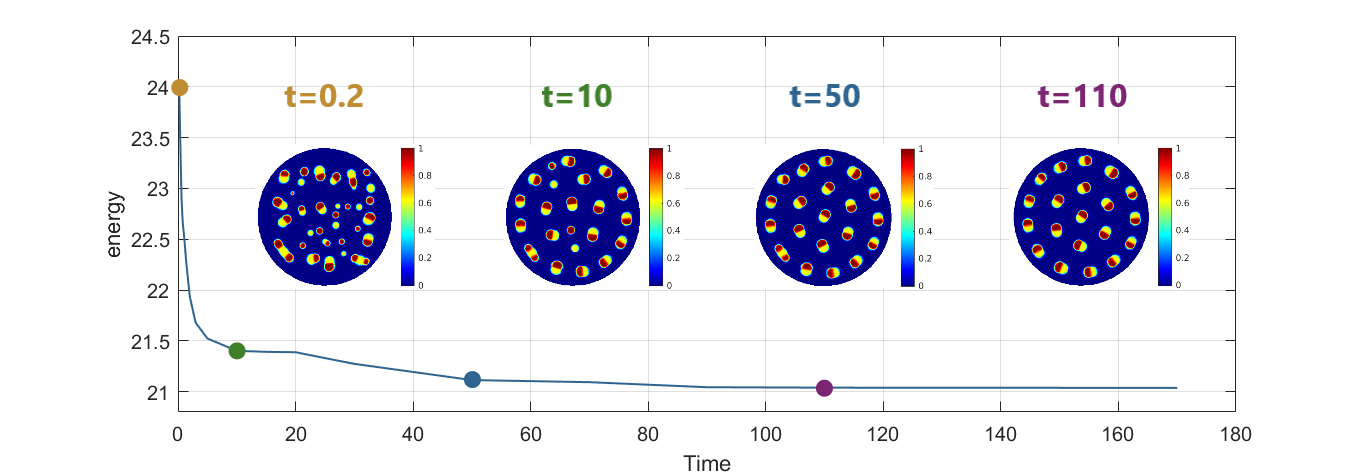}
    \end{center}
    \caption{Coarsening dynamics in ternary system on a unit disk with $\gamma_{11} =  \gamma_{22}=6000, \gamma_{12} = \gamma_{21}=0$ (top), and $\gamma_{11} = \gamma_{22} = 15000,\gamma_{12} = \gamma_{21}=0$ (bottom).}
    \label{fig:gamma11_effect}
\end{figure}
    
\begin{figure}[p]
    \includegraphics[width=1\textwidth]{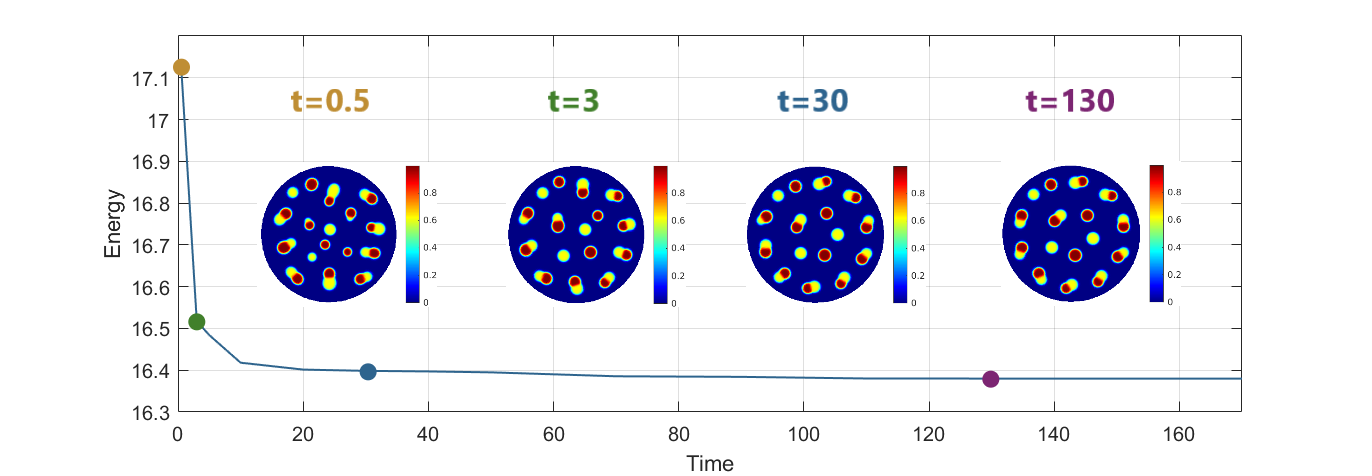}
    \includegraphics[width=1\textwidth]{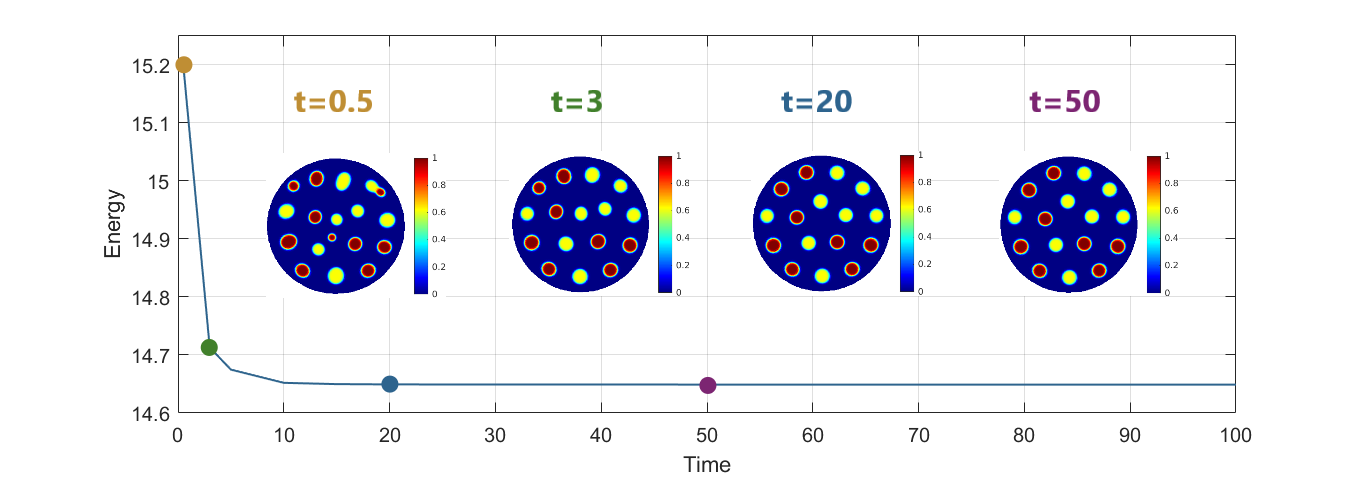}
    \includegraphics[width=1\textwidth]{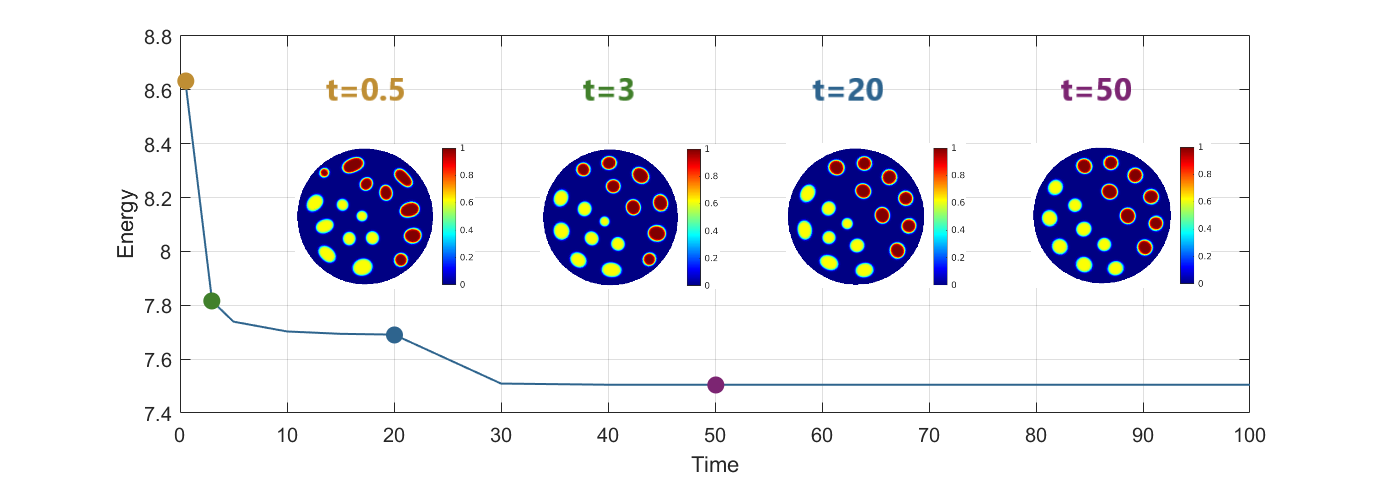}
    \centering
    \caption{Coarsening dynamics in ternary system on a unit disk with $\gamma_{11} =  \gamma_{22}=6000, \gamma_{12} = \gamma_{21}=3000$ (top), $\gamma_{11} = \gamma_{22} = 6000,\gamma_{12} = \gamma_{21}=6000$ (middle), and $\gamma_{11} = \gamma_{22} = 6000, \gamma_{12} = \gamma_{21}=8000$ (bottom).}
    \label{fig:gamma12_effect}
\end{figure}

\begin{figure}[h]
    \begin{center}
    \includegraphics[width=1\textwidth]{gamma11_6000_02.png}
    \includegraphics[width=1\textwidth]{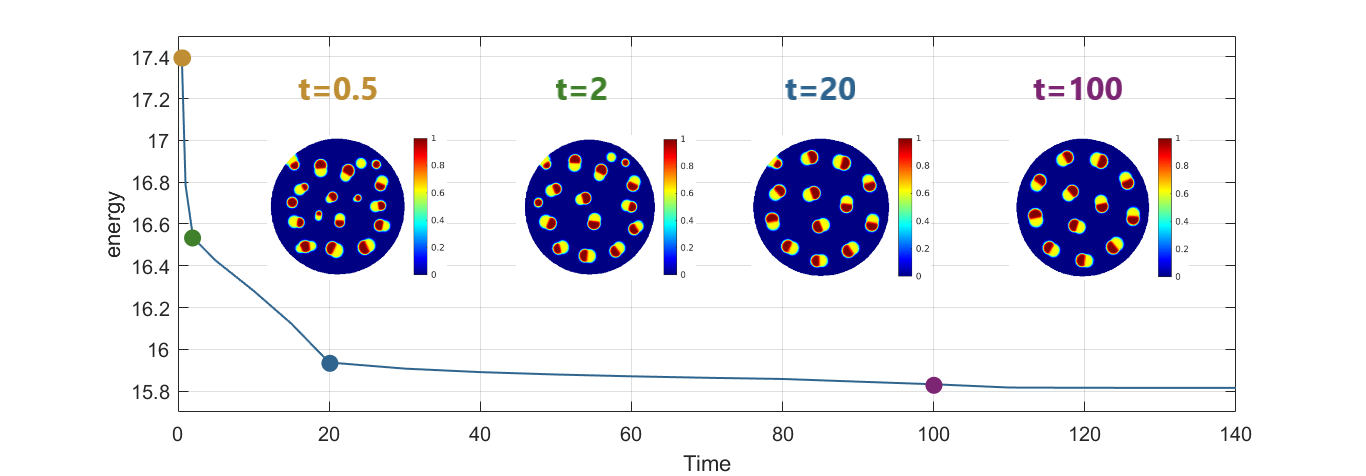}
    \end{center}
    \caption{different equilibrium in the ternary system on a unit disk with $\gamma_{11} =  \gamma_{22}=6000, \gamma_{12} = \gamma_{21}=0$, and the equilibrium energy: $E_{\text{(a)}}<E_{\text{(b)}}$.}
    \label{fig:energy_compare}
\end{figure}

\subsection{Coarsening Dynamics of NO model}\label{subsec:NO}
In this subsection, we numerically study the dynamics and equilibrium of the pACNO equations. While using the suggested scheme (\ref{eqn:BDF_NO_fully}) and taking $\omega\ll 1$ and relatively large $\gamma$, we can observe the dynamics and equilibria as several types of double bubble assemblies by choosing different long-range strengths $(\gamma_{ij})_{i,j=1,2}$ in pACNO equations. In our numerical experiments, we take the long-range repulsive force $\gamma_{11} = \gamma_{22}$ and $\gamma_{12} = \gamma_{21}$.

While choosing the random initial similar to the binary system, the phase separation of the ternary system costs several days in the experiments. Therefore, to avoid the long time consumption and study the equilibrium of the pattern formation on the interior of the domain, we construct the initial configuration in the ternary system in the disk domain by random circles on the interior of the disk with random centers and radius, each random circle is allowed overlapping with others. We call it a 'semi-random initial' in our following work. Moreover, we fix time step $\tau = 5\times10^{-4}$, relative area $\omega_1=\omega_2 = 0.09$, stabilized constants $\kappa_1 = \kappa_2 = 2000$, $\beta_1=\beta_2=0$ and $M_1 =M_2= 2000$.

\subsubsection{Coarsening dynamics and pattern formation in the disk domain}
The coarsening dynamics of the ternary system in the disk domain are shown in Figure \ref{fig:gamma11_effect}. Similar to the binary system, the insets are snapshots taken at four different times $t$. In Figure \ref{fig:gamma11_effect} (top), we take $\gamma_{11}=\gamma_{22} = 6000$, $\gamma_{12}=\gamma_{21} = 0$. Starting from the initial generated by random circles, in a very short period of time, the phase separation takes place, leading to the coexistence of double-bubbles and single-bubble patterns (t=0.5). Next, some tiny single bubbles disappear (t=15) and others begin merging into the double-bubbles pattern (t = 40). Finally, a pattern of all double bubbles is formed and composed of two circular layers, in which two double bubbles stay in the middle layer, surrounded by a layer of nine double bubbles, each with the head pointing to the tail of another, forming a circle, which we call the 'head-to-tail' pattern (t = 90).

While increasing the repulsive force $\gamma_{11}=\gamma_{22} = 15000$, and fixing other parameters, the repulsive interaction between the same component becomes stronger and leads to more double-bubbles as shown in Figure \ref{fig:gamma11_effect} (bottom). Moreover, the larger repulsive force induces three circular layers, in which one double bubble lies within the inner layer (the center of the disk domain), surround by six double bubbles forming a 'head-to-tail' circle in the middle layer, and the outside layer contains thirteen double bubbles with the 'head-to-tail' circle.

This is the most important numerical finding from our model system. To our best knowledge, it is the first time that the 'head-to-tail' pattern is numerically discovered in the study of the pACNO equations in the disk domain.

\subsubsection{The effect of $\gamma_{12}$}
The effect of $\gamma_{12} = \gamma_{21}$ can be found in Figure (\ref{fig:gamma11_effect}) and (\ref{fig:gamma12_effect}). While the long-range force $\gamma_{11}$ ($\gamma_{22}$, respectively) is dedicated to splitting the same component (red and yellow respectively), the repulsive strength $\gamma_{12} = \gamma_{21}$ can be viewed as the splitting force between red and yellow components, namely, for fixed $\gamma_{11}=\gamma_{22}$, the increase of $\gamma_{12}$ tends to separate the red and yellow constituents.

While taking $\gamma_{11}=\gamma_{22} = 6000$, we increase the value of $\gamma_{12}$ from $0$ (Figure \ref{fig:gamma11_effect} (top)) to $3000$ (Figure \ref{fig:gamma12_effect} (top)), $6000$ (Figure \ref{fig:gamma12_effect} (middle)) and $8000$ (Figure \ref{fig:gamma12_effect} (bottom)). By taking $\gamma_{12} = \gamma_{21} = 0$ in Figure \ref{fig:gamma12_effect} (top), there is no strength to separate the red and yellow components, thus they adhere together which results in all-double-bubble assembly. While increasing $\gamma_{12} = \gamma_{21} = 3000$, some of the double bubbles separate into single red and yellow bubbles, which lead to the coexistence of double-bubbles and single-bubble patterns as shown in Figure \ref{fig:gamma12_effect} (top). In Figure \ref{fig:gamma12_effect} (middle), the strengths $\gamma_{12} = \gamma_{21} = 6000$ become larger and break all double bubbles, therefore the bubble assembly tends to be purely single bubbles. Figure \ref{fig:gamma12_effect} (bottom) shows that with even larger repulsive strength $\gamma_{12} = \gamma_{21} = 8000$, the red bubbles are completely separated away from yellow bubbles. Although there are no theoretical studies of the NO model in the disk domain, our numerical results are consistent with the experiments in the square domain from Wang, Ren, and Zhao's work \cite{Wang_Ren_Zhao2019}.

\subsubsection{Equilibrium of the ternary system: 'Head-to-tail' pattern}

Due to the nonconvexity of the energy functional of the NO model, pACNO system may display multiple equilibra with the same parameter values. For the experiment as shown in Figure \ref{fig:energy_compare}, we take $\gamma_{11} = \gamma_{22} = 6000$ and $\gamma_{12} = \gamma_{21} = 0$ for each subfigure. Starting from several different semi-random initials, we observe two equilibria as shown in Figure \ref{fig:energy_compare}. In Figure \ref{fig:energy_compare} (top), the equilibrium has two 'head-to-tail' circles, with two double bubbles in the inside circle, and nine double bubbles in the outside circle and its energy is approximately $E_{(a)} = 15.7645$. With another semi-random initial, we can also observe two 'head-to-tail' circles as the equilibrium in Figure \ref{fig:energy_compare} (bottom), but different from the top one, the inside circle has three double bubbles and the outside circle has nine double bubbles, and the energy approximately is $E_{(b)} = 15.8156$. Comparing the equilibrium energy for each subfigure, it's easy to see that $E_{(a)} < E_{(b)}$, which indicates that the equilibrium on top is more energetically favorable than the one at bottom.

\section{Concluding remarks}

In this paper, we introduce a numerical method for studying the dynamics and equilibrium states of the OK and NO models in the disk domain. While applying the second-order BDF scheme in time and the ultraspherical spectral collocation method for spatial variables, we develop an energy stable scheme for the pACOK and pACNO equations in the disk domain with the Neumann boundary condition.

In the numerical experiments, we notice the phase separation and single bubble formation on both the interior and boundary of the disk by pACOK dynamics for the binary system when one species has a much smaller volume than the other. With the pACNO dynamics in the disk domain, the repulsive force between different species leads to various types of bubble assembly on the interior of the unit disk, the most important observation from the experiments is the all-double-bubbles pattern with the 'head-to-tail' circles for the ternary system on unit disk.

In the furture work, we will focus on a systematic study of the bubble assemblies for the binary system over disk domain. More specifically, we will explore all possible equilibria, each of which contains interior bubbles and boundary half-bubbles. This numerical investigation will be compared to the theoretical findings in Ren and Shoup's work \cite{Ren_Shoup2020}. For the ternary system, this work can be extended to the double-bubble patterns both in the interior and on the boundary of the disk. Another possible direction is to thoretically analyze the 'head-to-tail' pattern and study the parameter dependence of the patterns.

\section*{Acknowledgments} 

This work is supported by a grant from the Simons Foundation through Grant No. 357963 and NSF grant DMS-2142500.



\bibliography{OhtaKawasaki}

\end{document}